\documentclass[11pt,a4paper]{article}
\usepackage[dvips]{graphicx}
\usepackage[utf8]{inputenc}
\usepackage{enumerate}
\usepackage[spanish,english]{babel}
\usepackage{amsthm,amssymb,amsfonts,amsmath}
\usepackage[ruled,linesnumbered]{algorithm2e}
\usepackage{lineno}
\usepackage{xcolor}
\usepackage{cite}
\usepackage{pifont}
\usepackage{enumerate}
\usepackage{hyperref}
\usepackage{listings}
\usepackage{xcolor}
\usepackage{authblk}

\newtheorem{theorem}{Theorem}[section]

\newtheorem{proposition}[theorem]{Proposition}

\newtheorem{lemma}[theorem]{Lemma}
\newtheorem{corollary}[theorem]{Corollary}
\newtheorem{observation}[theorem]{Observation}

  \theoremstyle{definition}
\newtheorem{remark}[theorem]{Remark}

\graphicspath{{figures/}}

\newcommand{\Nl}{\ensuremath{\mathcal{G}}\xspace}
\newcommand{\T}{\ensuremath{\mathcal{T}}\xspace}

\newcommand{\diam}{\ensuremath{{\rm diam}}\xspace}

\title{The Borsuk number of a graph}
\author[1]{José Cáceres}
\affil[1]{Universidad de Almería, Spain} 
\author[2]{Delia Garijo}
\affil[2]{Universidad de Sevilla, Spain} 
\author[2]{Alberto Márquez}
\author[3]{Rodrigo I. Silveira}
\affil[3]{Universitat Politècnica de Catalunya, Barcelona, Spain}

\date{}

\begin{document}

\maketitle

\begin{abstract}
The \emph{Borsuk problem} asks for the smallest number of subsets with strictly smaller diameters into which any bounded set in the $d$-dimensional space can be decomposed. It is a classical problem in combinatorial geometry that has been subject of much attention over the years, and research on variants of the problem continues nowadays in a plethora of directions. In this work, we propose a formulation of the problem in the context of graphs. Depending on how the graph is partitioned, we consider two different settings dealing either with the usual notion of diameter in abstract graphs, or with the diameter in the context of continuous graphs,  where all points along the edges, instead of only the vertices, must be
taken into account when computing distances. We present complexity results, exact computations and upper bounds on the parameters associated to the problem.
\end{abstract}

\section{Introduction}

In 1933, Borsuk posed the question of whether every bounded set $X$ in $\mathbb{R}^d$ could be partitioned into $d+1$ closed (sub)sets each with diameter  smaller than that of $X$~\cite{borsuk1933three}.\footnote{In this context, the \emph{diameter} is defined as the supremum of the distances between any two points in the set, under the Euclidean metric.} This question can be reformulated in terms of the so-called  \emph{Borsuk number}.
For a set $X \subset \mathbb{R}^d$, the Borsuk number $b(X)$ is the minimal number of subsets with strictly smaller diameters into which $X$ can be decomposed. Figure~\ref{fig:square} illustrates an example.

Borsuk's question can be thus stated as whether $b(X) \leq d+1$, for any bounded set $X \subset \mathbb{R}^d$.
The answer was shown to be positive for $d=2,3$~\cite{eggleston55covering,perkal47sur}, 
and for general $d$ for centrally symmetric convex bodies~\cite{Rissling71Das}
and smooth convex bodies~\cite{hadwiger1946mitteilung}. 
The general answer turned out to be negative, as shown in 1993 by Kahn and Kalai~\cite{kahn1993counterexample}. 
Since then, researchers have been trying to figure out the smallest dimension for which the partition does not exist, being $d=64$ the currently best~\cite{JenrichB14}. Many variants of the  Borsuk problem have also been studied, see~\cite{zong21survey} for a recent survey.

\begin{figure}
\centering
    \includegraphics{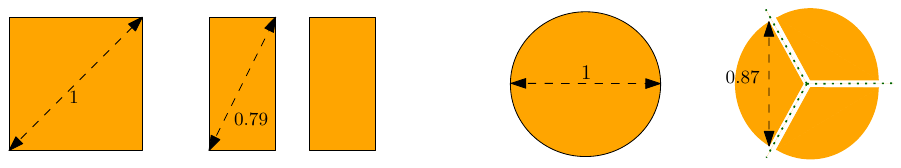}
     \caption{A square with diameter $1$ can be divided into two pieces with smaller diameter ($\approx  0.79$), and therefore, its Borsuk number is $2$. However, the circle has to be divided into at least three pieces to decrease the diameter, thus it has Borsuk number $3$. }
	\label{fig:square}
\end{figure}

In this paper, we present a formulation of Borsuk's problem in the context of graphs. Conceptually, we define the \emph{Borsuk number $b(G)$ of a graph $G$} in a similar fashion to the original problem:  the smallest number of subgraphs with smaller diameter into which $G$ can be partitioned. However, we need to define carefully how a graph can be partitioned.
We propose two natural ways to do this, which lead to two variants of the problem, the \emph{continuous} and the \emph{discrete} variants.  

The discrete variant deals with distances in abstract graphs. The partition of the graph is done by deleting edges, and we consider the usual notion of \emph{diameter} in graphs, that is,  the maximum distance between any two vertices. This parameter has been studied extensively over the years in many directions; in particular, its computation is a fundamental problem in algorithmic graph theory; see, for instance~\cite{RodittyW13, BergeDH24, BergeDH25, BHM20, Cabello19, Ducoffe2022, RoddichowskiKMS21, Le2024}. 

The continuous version of the problem partitions a \emph{continuous geometric graph}
by cuts with straight lines.
A \emph{geometric graph} is an undirected graph  whose vertices are points in $\mathbb{R}^2$, and whose edges are straight-line segments connecting pairs of points.  When there are
no crossings between the segments (except at their endpoints), the geometric graph is said to be \emph{plane}. 
The vertices and the edges of a plane geometric graph $G$ determine an infinite set of points in the plane, where each point on an edge is considered to be part of the graph. This set can be seen as a 1-dimensional simplicial complex, which we call the \emph{continuous geometric graph defined by $G$}, or simply, \emph{continuous geometric graph}, and denote it by $\Nl$. We use the term ``continuous" to stress that all points along the edges of $G$ are part of $\Nl$, and therefore, the diameter of $\Nl$ is  the maximum distance between any two points along the edges, rather than between any two vertices. As in the discrete variant, the efficient computation of this parameter is notably difficult; see~\cite{CGKKPS25} for very recent results on the topic.

The diameter in graphs plays a central role in problems related with network design and network optimization. How to modify a graph in order to optimize a certain parameter is a problem that is currently being actively studied, both in the discrete version and in the continuous one. Probably the most studied problem in this context is the \emph{diameter-optimally augmentation problem}, whose goal is, informally, to add a given number of new connections to a graph such that the diameter of the resulting graph is minimized. This problem is especially challenging not only in the continuous setting (where the new connections may join any two points along the edges of the graph) but also in the discrete case (where the connections are new edges connecting two vertices.) See, for example,~\cite{BiloGLS23, FGGM14, GGKSS2015} (and the references therein) for results in the discrete setting, and~\cite{bae2019shortcuts,CACERES2018192,DBLP:conf/swat/CarufelMS16, CGSS-20, GarijoMRS19} for studies in the continuous case\footnote{We note that continuous graphs, which are also called  \textit{metric graphs} in other areas closer to analysis~\cite{Lumer_1980,Friedlander_2005}, are defined, in general, by graphs embedded in metric spaces~\cite{CGKKPS25, MeanDist23}. In this paper, we restrict our attention to those defined by plane geometric graphs, which are also being intensively studied, mainly in the context of graph augmentation~\cite{CACERES2018192,DBLP:conf/swat/CarufelMS16, CGSS-20, GarijoMRS19}.}.

The Borsuk's problem for graphs that we introduce in this paper also deals with the computation of the diameter while modifying a graph. Our study is undertaken in both, the discrete setting and the continuous one. A significant contribution of our work is that the key ideas to prove our main results are based on an accurate analysis of how shortest paths and distances can change when modifying a graph.

\paragraph{Our results.}
After introducing the required definitions and notation in Section~\ref{sec:preliminaries}, we define the two variants of Borsuk number of a graph: the continuous variant (Section~\ref{sec:cbn}) and the discrete variant (Section~\ref{sec: definition discrete}). In analogy with Borsuk’s original problem, in both subsections, we discuss whether the Borsuk number can be upper bounded by a constant. Although this question remains open in the continuous setting, we make some non-trivial progress, presenting an upper bound that is linear on the number of vertices of the graph. In the discrete case, we construct examples of graphs with linear  Borsuk number (in the number of vertices.)  

Section~\ref{sec:complexity} addresses complexity questions. We prove the NP-completeness of the problem of deciding whether the Borsuk number of a graph is below a given threshold, in both the discrete and the continuous setting. Our study relates Borsuk's problem in graphs to other well-known problems in graph theory and discrete geometry: the \emph{minimum clique cover} problem, and a variant of the \emph{point line cover} problem.

In Section~\ref{sec:monotone}, we further explore the question of how large the Borsuk number of a  graph might be. Concretely, we study a
large class of continuous geometric graphs that satisfy a natural property of monotonicity with respect to the intersecting lines. Our main result establishes a non-trivial upper bound for their  Borsuk number which, informally, depends on the structure of shortest paths of a continuous geomeric graph. 

Section~\ref{sec:trees} contains our study on trees. We show that, while the discrete variant of the Borsuk number can be linear in the number of vertices of the graph, in the continuous setting, this parameter is upper bounded by $3$, and can be computed in quadratic time in the number of vertices of the tree. 

We conclude in Section~\ref{sec:conclusions} summarizing some intriguing open questions.

\section{Preliminaries}\label{sec:preliminaries}

Let $G=(V,E)$ be a plane geometric graph, and let $\Nl$ be its associated continuous geometric graph. Each edge $e=uv$ of $G$ has a length, $|e|$, equal to the Euclidean distance between its endpoints $u,v$. Since all geometric graphs considered in this work are plane, we omit this term throughout the paper, as well as the term ``continuous" when no confusion arises.

Since $\Nl$ can be seen as a 1-dimensional simplicial complex, we shall indistinctly say vertices and edges of $G$ or $\Nl$. In addition, we shall use $p\in \Nl$ to indicate that $p$ is a point of $\Nl$, and  $p\in e$ for edges $e$.

A point $p\in \Nl$ can be specified by a triple $(uv,u,\lambda)\in E\times V\times [0,|uv|]$, which represents the point of the segment $uv$ at distance $\lambda$ (along $uv$) from the endpoint $u$. The triples $(uv,u,\lambda)$ and $(uv,v,|uv|-\lambda)$ define the same point of $\Nl$.

\subsection{Diameter: discrete and continuous variants}

A \emph{path} $P$ in a graph is a sequence of vertices and edges, where each edge connects two consecutive vertices in the sequence. The \emph{length} of the path, $|P|$, is the sum of the lengths of its edges. Notice that, while in a geometric graph, the length--edges are determined by the Euclidean distance between the segments endpoints, in an abstract graph, all length--edges are equal to $1$.
The \emph{distance}  between two vertices $u,v$ in a graph $G=(V, E)$, denoted by $d_G(u,v)$, is the length of a shortest path connecting them; we use $d_T(u,v)$ when the graph is a tree $T$. 
The \emph{diameter} of $G$ is defined as
$\diam(G) :=\max_{u,v\in V} d_G(u,v)$.

The concepts of path and distance naturally extend from a geometric graph $G$ to a continuous geometric graph $\Nl$. In the continuous setting, paths are defined not only by edges, but also by at most two fragments of edges (which are also segments with their usual metric
and measure). For points $p,q \in\Nl$, we use $pq$-path to refer to a path with endpoints $p,q$, and $d_{\Nl}(p, q)$ to refer to their distance ($d_{\T}(p, q)$ for continuous geometric trees $\T$). 
The distance between the two endpoints of an edge $e$ is $|e|$. Thus, if two points $p = (uv, u, \lambda)$ and $q = (uv, u, \lambda')$ lie on the same edge $uv$, then $d_{\Nl}(p, q) = |\lambda - \lambda'|$.

The \emph{diameter}\footnote{The diameter of a continuous graph is also called \emph{generalized diameter}~\cite{CG82}.} of $\Nl$ is defined as ${\rm diam}(\Nl):=\max_{p,q\in \Nl} d_\Nl(p,q)$. Two points whose distance attains this value are called \emph{diametral points} or \emph{diametral pair} (of points), and the shortest paths connecting diametral points are called \emph{diametral paths}; the analogous terms are used for vertices and paths in $G$. 

Notice that the notation ${\rm diam}(G)$ refers to the \emph{discrete} version of this parameter (meaning to consider distances only between the vertices of $G$), while the notation ${\rm diam}(\Nl)$  indicates the \emph{continuous} variant (distances are taken between every two points along the edges of $G$).

Figure~\ref{fig:diameter_shortcut_worse} illustrates how the diameter of a graph can change depending on whether we consider the discrete or the continuous  variant. 
In the discrete case, when adding a new edge, the diameter  
can remain the same or decrease  
(the latter occurs if and only if the edge shortens the distance between all diametral pairs), but it cannot  increase. 
In contrast, in the continuous case, the diameter can increase after an edge insertion.
This is because, in continuous graphs, all points on the new edge participate in the diameter computation, thus the  inserted edge can contain a point whose farthest point is at a larger distance than the original diameter.
An example of this---rather counterintuitive---situation is shown in Figure~\ref{fig:diameter_shortcut_worse}.

\begin{figure}
\centering
    \includegraphics{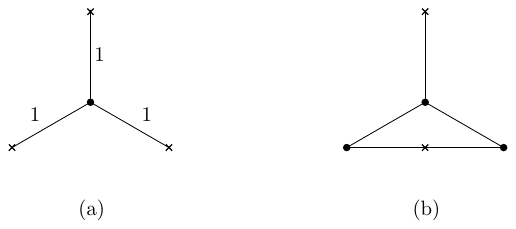}
     \caption{ (a) A graph with diameter $2$: the value is the same in both settings, the discrete and the continuous; the distance between any two vertices with a cross mark gives the diameter.
     (b) Adding one edge to the graph in (a): the diameter in the continuous setting is now given by a vertex and the midpoint of the new segment; its value increases ($\approx  2.86$).}
	\label{fig:diameter_shortcut_worse}
\end{figure}

Another difference between both settings is that, in contrast to abstract graphs, in a continuous graph, there can be an infinite number of diametral points. For example, the diameter of any (continuous) cycle $\mathcal{C}$ is $|\mathcal{C}|/2$, where $|\mathcal{C}|$ denotes the perimeter or length of the cycle; moreover, every point on the cycle belongs to a diametral pair, see Figure~\ref{fig:infinite_diameters}.

\begin{figure}
\centering
	\includegraphics{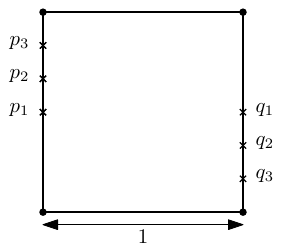}
     \caption{A square with side length $1$. The diameter of this cycle is $2$, and  every point belongs to a diametral pair: $(p_1,q_1), (p_2,q_2), (p_3,q_3), \ldots$}
	\label{fig:infinite_diameters}
\end{figure}

\section{Definitions of Borsuk number}\label{sec:definitions}

\subsection{Continuous variant}\label{sec:cbn}

We partition a continuous geometric graph $\Nl$ by a sequence of cuts with straight lines.

A line $\ell$ naturally divides $\Nl$ into subgraphs. 
However, by cutting one connected graph along $\ell$ we may obtain multiple subgraphs  (see Figure~\ref{fig:star}), thus distorting the core idea of Borsuk's original problem. To remain in the same spirit, our aim would be that each cut in a connected graph produces two connected subgraphs. 
To guarantee this, after cutting we add to the subgraphs obtained in each halfplane (determined by $\ell$) the longest segment in $\ell$ that has its endpoints in $\Nl \cup \ell$; this maximal segment is denoted by
 $s_{\ell}$. 
 In this way, the partition results in exactly two connected geometric subgraphs of $\Nl \cup \ell$, which are defined as:
$$\Nl_{\ell}^+:=(\ell^+\cap \Nl)\cup s_{\ell} \quad {\rm and} \quad  \Nl_{\ell}^-:=(\ell^-\cap \Nl)\cup s_{\ell},$$
where $\ell^+$ and $\ell^-$ are, respectively,  the open half-planes above and below  $\ell$ (right and left when the line is vertical). We also insert in both subgraphs, $\Nl_{\ell}^+$ and $\Nl_{\ell}^-$, a new vertex at each intersection point of $\ell$ with $\Nl$. Figure~\ref{fig:star} illustrates the partition method by a line $\ell$.

In addition to preserving the original idea of Borsuk's  problem, our partitioning method has natural applications in network design: given a road network, the goal would be to construct a new road (crossing it) to decrease the maximum distance in each of the resulting parts (compared to the original network); the new road would be our segment $s_{\ell}$.

An ordered set of lines $(\ell_1, \ldots, \ell_k)$ partitions a graph $\Nl$ sequentially:  each line $\ell_i$ is inserted only into one of the existing subgraphs, producing two new subgraphs of the types $\Nl_{\ell}^+$ and $\Nl_{\ell}^-$. At the end of the process, one obtains $k+1$ geometric subgraphs of $\Nl \cup \ell_1 \cup \ldots \cup \ell_k$. To simplify  notation, we denote them by $\Nl_1,\ldots ,\Nl_{k+1}$, omitting the subgraph that has been partitioned at that stage of the process, and the line used to partition.

\begin{figure}
\centering
	\includegraphics[width=13cm]{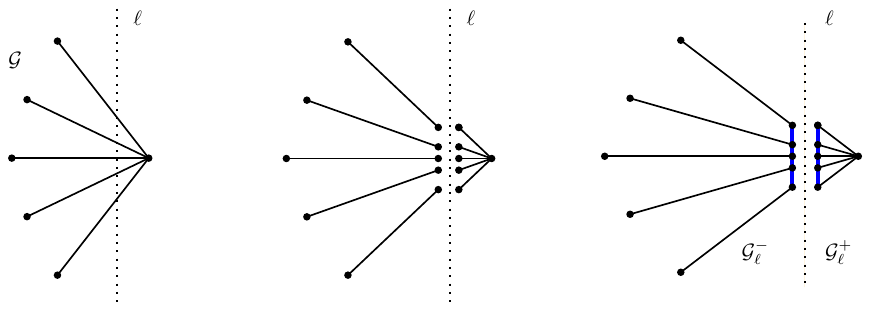}
     \caption{Partitioning a graph $\Nl$ by a line $\ell$ to obtain the subgraphs $\Nl_{\ell}^+$ and $\Nl_{\ell}^-$ (segment $s_{\ell}$ in thicker blue.)}
	\label{fig:star}
\end{figure}

We define the \emph{Borsuk number of $\Nl$}, and denote it by $b(\Nl)$, as the minimum cardinality of a partition of $\Nl$ by an ordered set of lines $(\ell_1, \ldots, \ell_k)$ into  subgraphs $\Nl_1,\ldots ,\Nl_{k+1}$ (of $\Nl \cup \ell_1 \cup \ldots \cup \ell_k)$ such that: $$\max\{\diam(\Nl_1),\ldots ,\diam(\Nl_{k+1})\}< \diam(\Nl).$$
Figure~\ref{fig:example}a and Figure~\ref{fig:example}b show examples of this definition: after partitioning the square  with a vertical line $\ell$ through its center point, we obtain two subgraphs (of $\Nl \cup \ell$) 
that have a diameter smaller than that of $\Nl$, and so $b(\Nl)=2$ (best possible).
The example in Figure~\ref{fig:example}b shows a 4-star graph, which requires at least two lines to decrease the diameter of the original graph, giving Borsuk number three. 

Note that the definition of  Borsuk number (in this continuous setting) does not require the graph $\Nl$ to be connected. For a disconnected $\Nl$, which has infinite diameter, any partition by lines must in fact connect the components in order to reduce the diameter of the original graph to a finite value. In Section~\ref{sec:complexity}, we use a construction with a disconnected $\Nl$ to address a complexity question, but in the remainder of the paper we assume that $\Nl$ is connected, as this is a natural property of the problem.

We shall use the term \emph{correct partition} to refer to a partition of a graph in which all resulting subgraphs indeed have a smaller diameter than that of the original graph.

\begin{figure}
\centering
	\includegraphics{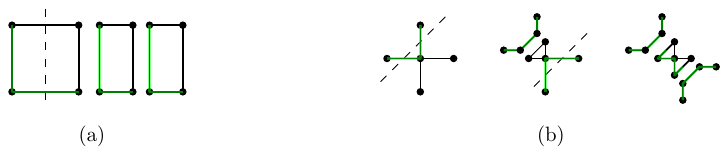}
     \caption{(a) A square with side length 1 and diameter 2 (given by green paths), and a partition with a  line (dashed). (b) A 4-star partitioned into three subgraphs.} 
	\label{fig:example}
\end{figure}

As Borsuk's conjecture holds for bounded sets in the plane, that is, $b(X) \leq 3$ for any bounded set $X \subset \mathbb{R}^2$, one of the main open questions in this continuous setting is whether $b(\Nl)$ can be upper bounded by a constant. The following proposition constitutes a first answer to this question, giving an upper bound that is linear on the number of vertices of $\Nl$.

\begin{proposition}\label{prop:firstbound}
Given a continuous geometric graph $\Nl$ with $n\geq 3$ vertices, $b(\Nl)\leq n-1$. 
\end{proposition}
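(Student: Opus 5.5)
The plan is to cut $\Nl$ with $n-2$ parallel lines, one just to the right of each of $v_2,\ldots,v_{n-1}$, so that we obtain $n-1$ slab-shaped pieces. First I would rotate the coordinate system so that the $n$ vertices have pairwise distinct $x$-coordinates, and label them $v_1,\ldots,v_n$ in increasing order of $x$. For a small $\varepsilon>0$ let $\ell_i$ be the vertical line $x=x(v_i)+\varepsilon$, for $i=2,\ldots,n-1$. I would apply the lines sequentially from left to right, each time cutting the piece currently lying to the right of the previous line.

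This yields $n-1$ subgraphs. The leftmost one contains $v_1,v_2$. For $3\le i\le n-1$ the $i$-th piece contains exactly the vertex $v_i$ of $G$. The last piece contains $v_n$ together with the segment $s_{\ell_{n-1}}$. Every piece that is not a boundary piece consists of three kinds of elements: the fragments of edges incident to its own vertex, the fragments of edges crossing the slab from side to side, and the two cut segments on its bounding lines. Because $G$ is plane, the crossing fragments are vertically ordered and pairwise disjoint. Each of them has both endpoints on the two cut segments, so every point of the piece is joined to one of the bounding segments by a straight fragment of an edge.

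The core step is to bound the diameter of each piece strictly below $\diam(\Nl)$, and the only fact I would use about $\Nl$ is that $|pq|\le d_\Nl(p,q)\le\diam(\Nl)$ for all $p,q\in\Nl$. For two points $p,q$ of a piece, I would build a path that starts at $p$, runs along its edge fragment to a bounding cut segment, follows that segment (or crosses to the opposite one through a crossing fragment), and then reaches $q$. Each of these pieces of path should be bounded by Euclidean lengths of segments between points of $\Nl$, and those lengths are at most $\diam(\Nl)$. The point of taking $\varepsilon$ small is that the vertex slabs become thin, so their contribution is essentially the width of the gap between consecutive vertices plus a term of order $\varepsilon$.

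I expect this step to be the main obstacle. Summing the pieces crudely only gives a bound of the form $2\diam(\Nl)$, and the claim needs strict inequality. My first attempt would exploit the freedom in placing the lines, choosing them between consecutive vertices rather than just after each vertex. I would also use the fact that a piece contains at most one vertex, so every shortest path inside it is a concatenation of at most three straight segments with a convex-like structure. From this I would try to show that the path from $p$ to $q$ inside the piece is no longer than some path of $\Nl$ between points that lie strictly inside a diametral pair. If the crude bound cannot be improved in general, the fallback is to treat separately the case where some piece still realizes $\diam(\Nl)$, for instance when a cycle of $\Nl$ lies inside a single slab. In that case I would perturb the offending line slightly, and the hypothesis $n\ge3$ guarantees at least one line to perturb.
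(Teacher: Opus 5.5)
There is a genuine gap. The step you flag as the main obstacle is the entire content of the proposition, and the tool you plan to use for it is too weak. The inequality $|pq|\le d_{\mathcal{G}}(p,q)\le \mathrm{diam}(\mathcal{G})$ controls one straight segment at a time, while a shortest path inside a piece concatenates several (edge fragments and sub-segments of the cuts). As you observed, this route only yields a multiple of $\mathrm{diam}(\mathcal{G})$, and moving or perturbing the lines does not repair that.

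The missing idea is to compare distances in a piece with \emph{graph} distances in $\mathcal{G}$. Take $a,b\in\mathcal{G}$ lying in a piece. Every $ab$-path of $\mathcal{G}$ splits into portions inside the slab and excursions outside it. Each excursion leaves and re-enters through the same bounding line, so it can be replaced by the sub-segment of that cut segment between its endpoints, which is strictly shorter because the excursion leaves the line. Hence the piece distance of $a,b$ is at most $d_{\mathcal{G}}(a,b)$, and strictly smaller whenever every shortest $ab$-path in $\mathcal{G}$ leaves the slab. What remains is to show that the diameter of the piece is realized, or approached, by such a pair of points of $\mathcal{G}$. This is not automatic, because a diametral pair of the piece may lie on a cut segment off $\mathcal{G}$.

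That last step is where the paper's placement of the lines matters, and your $\varepsilon$-offset works against it. The paper takes a direction not parallel to any edge and puts one line through each vertex. Each strip then has its two vertices on its bounding lines and none in its interior, so every piece is a ``ladder'' of edge fragments spanning the strip from one bounding line to the other, joined by the two cut segments. When a diametral pair of a piece involves a point of a cut segment, its two shortest paths form one of two cycles:
\begin{itemize}
\item a trapezoid (two spanning fragments plus two cut sub-segments), where the pair is traded for a corner and its antipode;
\item a triangle (two fragments meeting at a vertex on one bounding line, plus one cut sub-segment on the other), where the pair is traded for a corner and a point just past its antipode.
\end{itemize}
In both cases the new points lie in $\mathcal{G}$, and by the excursion argument their $\mathcal{G}$-distance is strictly larger than the piece diameter.

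With your lines, every middle piece has a vertex in its interior, with a fan of fragments to one cut and short fragments to the other, so this cycle structure is lost. Your auxiliary claim that shortest paths inside a piece have at most three straight segments is also false. Take two horizontal crossing fragments and a diagonal one between them: the shortest path from near the left end of the upper one to near the right end of the lower one uses both cut segments and the diagonal, five straight pieces in total.
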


\begin{proof}
Consider a direction that is not parallel to any of the edges of $\Nl$; assume, for simplicity, that this is the vertical direction. 
We divide $\Nl$ by $n$ vertical lines, one through each vertex, to create $n-1$ graphs, denoted by $\Nl_1, \ldots, \Nl_{n-1}$. Thus, there are $n-1$ vertical strips, each containing two vertices of the original graph, and all edges go from the initial vertical line defining the strip to the other vertical line of the strip.

We are going to prove that in each $\Nl_i$, there are two points of the original graph realizing the diameter of $\Nl_i$, and that their distance is smaller than the diameter of $\Nl$.

Consider a graph $\Nl_i$, and a diametral pair of $\Nl_i$ formed by $p$ and $q$. 

If both $p$ and $q$ are part of $\Nl$, either they belong to the same edge of $\Nl$ or to two different edges. 
If they lie on the same edge, since $n \geq3$, their distance must be smaller than ${\rm diam}(\Nl)$.
Otherwise, their distance in $\Nl_i$ is smaller than their distance in $\Nl$, because the vertical edges of $\Nl_i$ do not exist in $\Nl$.
Therefore, we have that $d_{\Nl_i}(p,q)$ is strictly smaller than ${\rm diam}(\Nl)$.

Otherwise, at least one of $p$ or $q$ lies on one of the two vertical lines defining $\Nl_i$.

Since there are no vertices in the interior of $\Nl_i$, the shortest path between $p$ and $q$ is not unique: there must exist two shortest paths between $p$ and $q$ such that the union of both paths is a trapezoid (with the parallel edges being segments of the vertical lines) or a triangle (with one vertical side). 
In the following, refer to Figure~\ref{fig:bandas}.

In the case of the trapezoid, consider two diagonally opposite vertices of the trapezoid, $p'$ and $q'$, and the longest path along that trapezoid joining them.
That path contains a vertical segment, that without loss of generality we assume that starts  at $p'$. 
Then, if $p''$ is the diametral point to $p'$ in the trapezoid, it is straightforward to see that $p', p'' \in \Nl$, and that $d_{\Nl_i}(p,q)=d_{\Nl_i}(p',p'')$. But 
$d_{\Nl}(p',p'')>d_{\Nl_i}(p',p'')$.

In the case of the triangle, let $u,p,v$ be its vertices.
Assume without loss of generality that $u$ and $p$ lie on the same vertical line bounding $\Nl_i$, thus $v$ lies on the other line---and must be a vertex of $\Nl$.
The diameter of $\Nl_i$ is given by any point on this triangle and its corresponding diametral point, also on the triangle.
Consider $u$ and its diametral point $u'$.
Now, consider a point $u''$   very close to $u'$, on the same edge as $u'$, such that the shortest path in $\Nl_i$ between $u$ and $u''$ goes through $p$ (and the vertical segment of the triangle).
Both $u$ and $u''$ exist in $\Nl$, but their distance is larger there, since the vertical segment does not exist in $\Nl$.
Then, ${\rm diam}(\Nl_i )=d_{\Nl_i}(u,u')<d_{\Nl}(u,u'') \leq {\rm diam}(\Nl)$.
\end{proof}

\begin{figure}
\centering
	\includegraphics{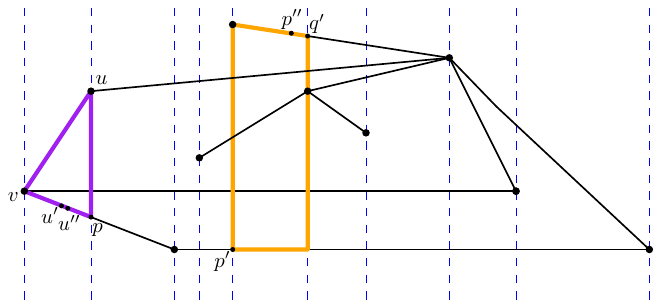}
     \caption{Illustration of Lemma~\ref{prop:firstbound}. Splitting a graph $\Nl$ by vertical lines to create the graphs $\Nl_i$. In orange we illustrate the trapezoid case of the proof, while in purple we show the triangle case.}
	\label{fig:bandas}
\end{figure}

\subsection{Discrete variant}\label{sec: definition discrete}

We now consider partitions of a  connected abstract graph $G$ by deleting edges; here all edges have the same length, equal to 1. (We do not deal with the case of deleting vertices as our interest relies on the distances between vertices of the original graph.)

Edge deletion is a fundamental operation in graph theory. As in the continuous variant, this partitioning method preserves the core idea of Borsuk's original problem: by deleting one edge, we generate at most two subgraphs. In addition, there are multiple applications related with edge deletion; for example, this operation can be seen as removing some connection between the individuals in a social network, where the distance is a key parameter to measure the behavior of the network.

We define the \emph{Borsuk number of a (connected abstract) graph $G$}, and denote it by $b(G)$, as the minimum cardinality of a partition of $G$ (by deleting edges) into subgraphs $G_1,\ldots ,G_k$ (of $G$) such that: $$\max\{\diam(G_1),\ldots ,\diam(G_k)\}< \diam(G).$$
Recall that, in the discrete variant, the diameter of a graph is the maximum distance between any two vertices. 

The following observation exhibits some simple examples. 

\begin{observation}\label{obs:examples}
\begin{enumerate}
\item[(i)] If $G$ is a path or a cycle of even length, $b(G)=2$.
\item[(ii)] If $G$ is a cycle of odd length, $b(G)=3$.
    \item[(iii)] If $G$ is a star graph on $k+1$ vertices, $b(G)=k$.
\end{enumerate}
\end{observation}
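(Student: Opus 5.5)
Each item combines an explicit partition (upper bound) with a short lower-bound argument. Throughout, the parts of a partition are the connected components left after deleting edges, and any isolated vertex counts as a part of diameter $0$. The requirement is that every part has diameter at most $\diam(G)-1$. A path on at least $3$ vertices or a cycle has diameter at least $1$, and since $G$ is connected, $b(G)\ge 2$ always holds.

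\emph{(i) Paths and even cycles.} Let $P$ be the path $v_0v_1\cdots v_m$ with $m\ge 2$, so $\diam(P)=m$. Deleting one middle edge gives two subpaths with $\lceil m/2\rceil$ and $\lfloor m/2\rfloor$ edges. Both have diameter at most $\lceil m/2\rceil< m$, so $b(P)=2$.

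For the even cycle $C_{2k}$, $\diam=k$. We delete two antipodal edges. This leaves two paths, each with $k$ vertices and diameter $k-1<k$. Hence $b=2$.

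\emph{(ii) Odd cycles.} Let $C_{2k+1}$ have $\diam=k$, where $k\ge1$.

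For the lower bound, any split into two parts must delete at least two edges. If exactly two edges are deleted, the result is two paths with total vertex count $2k+1$. One of them therefore has at least $k+1$ vertices, so its diameter is at least $k$, and this partition fails. Deleting more edges produces more parts. So $b\ge 3$.

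For the upper bound, delete three edges so that the resulting paths have $k$, $k$ and $1$ vertices, respectively. Their diameters are at most $k-1$, so $b=3$. (When $k=1$ we have the triangle, and splitting it into three isolated vertices works.)

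\emph{(iii) Stars.} The star $K_{1,k}$ has diameter $2$ for $k\ge2$. The parts of any correct partition must have diameter at most $1$, so each part has at most one edge: it is either a single edge at the center or an isolated vertex. The center therefore lies in exactly one part. That part contains at most one leaf, so the remaining $k-1$ leaves are isolated, each forming its own part. The minimum number of parts is thus $1+(k-1)=k$, attained by keeping exactly one edge. So $b(G)=k$.

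The only delicate step is the counting argument for odd cycles. Everything else is a direct construction.
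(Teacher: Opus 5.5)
Your proof is correct and follows the same route as the paper's: explicit edge deletions give the upper bounds, the pigeonhole observation shows that deleting two edges of $C_{2k+1}$ leaves an arc with at least $k+1$ vertices, and in the star every part of a correct partition must be a single edge or an isolated vertex. There is one harmless off-by-one in (i): deleting one edge of $v_0\cdots v_m$ leaves only $m-1$ edges, so the pieces have $\lceil (m-1)/2\rceil$ and $\lfloor (m-1)/2\rfloor$ edges rather than $\lceil m/2\rceil$ and $\lfloor m/2\rfloor$, which only strengthens your bound.
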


\begin{proof}
For paths and cycles of even length it suffices to remove, respectively, one and two edges to obtain two subgraphs with a diameter smaller than the original graph, and so the Borsuk number is $2$. If we delete only two edges in a cycle of odd length, one of the resulting subgraphs will have the same diameter as $G$; this forces to delete three edges giving rise to three subgraphs, leading to Borsuk number $3$.
To decrease the diameter of a star, one needs to remove all but one edge, thus for a star with $k+1$ vertices, this generates  a partition into $k$ subgraphs, leading to Borsuk number $k$. 
\end{proof}

In Section \ref{sec:trees}, we study the Borsuk number of an arbitrary tree, in both the discrete and the continuous settings. We show that while $b(\mathcal{T})$ is upper bounded by a constant for any continuous geometric tree $\mathcal{T}$, in the discrete case, the Borsuk number can be linear in the number of vertices of the graph, as it happens for the star. This linearity also occurs in other families of graphs, such as unicycle graphs (which comes from the scenario in trees) and maximal outerplanar graphs that are not trees. Figure~\ref{fig:outerplanar} shows examples of the latter case: the graph on the left has diameter two, so the connected components obtained after removing the edges must have either diameter one or zero; this implies the deletion of at least $n-2+\lfloor \frac{n-1}{2}\rfloor$ edges for a fan with $n+1$ vertices, resulting in Borsuk number $\lceil \frac{n}{2}\rceil$. This argument can be adapted for maximal outerplanar graphs with diameter larger than two, such as the example illustrated in Figure~\ref{fig:outerplanar}(right). 

\begin{figure}
\centering
	\includegraphics{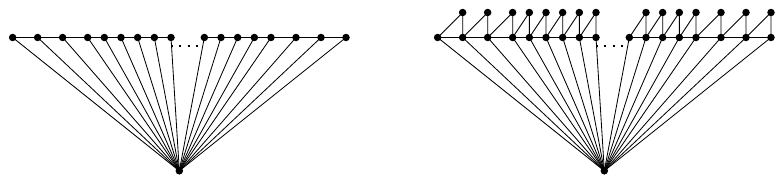}
     \caption{Examples of maximal outerplanar graphs $G$ with linear  Borsuk number $b(G)$.}
	\label{fig:outerplanar}
\end{figure}

\section{Complexity}\label{sec:complexity}

In this section, we prove that deciding whether the Borsuk number of a graph is below a given threshold is an NP-complete problem, in both the discrete and the continuous variants.

In the discrete setting, this problem is related to the  \emph{minimum clique cover problem}.
A \emph{clique cover} of a graph $G$ is a partition of its vertex set into cliques (i.e., subsets of vertices that induce a complete graph.) The \emph{clique cover number} of $G$, denoted by $\theta(G)$, is the minimum size of a clique cover, that is, the minimum number of cliques that cover all  vertices. The minimum clique cover problem seeks for a minimum clique cover.

\begin{lemma} \label{lem:diam2}
If $G$ is a non-complete connected graph, then $b(G)\leq\theta(G)$. Furthermore, if $\diam(G)=2$ then $b(G)=\theta(G)$.
\end{lemma}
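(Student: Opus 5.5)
We prove the two parts separately: the upper bound by an explicit edge deletion, and the equality for diameter two by showing that every correct partition is essentially a clique cover.

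\emph{Upper bound.} Take a minimum clique cover $C_1,\ldots,C_{\theta(G)}$ of $G$. Delete every edge of $G$ whose endpoints lie in different cliques $C_i,C_j$, and keep all edges inside each $C_i$. The remaining subgraphs are exactly the complete graphs $G[C_1],\ldots,G[C_{\theta(G)}]$, so there are $\theta(G)$ of them. Each has diameter at most $1$; a singleton clique has diameter $0$.

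Since $G$ is connected and not complete, it has two non-adjacent vertices. Hence $\diam(G)\geq 2$, and every part has diameter strictly smaller than $\diam(G)$. This is a correct partition into $\theta(G)$ subgraphs, so $b(G)\leq\theta(G)$.

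\emph{Equality when $\diam(G)=2$.} Consider any correct partition of $G$ into subgraphs $G_1,\ldots,G_k$ obtained by deleting edges. We interpret the condition $\diam(G_i)<2$ as requiring each part to be connected, or at least each of its connected components to have diameter less than $2$. In the paper's definition the parts are the connected pieces left after deletion, so each $G_i$ is connected with $\diam(G_i)\leq 1$.

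A connected graph with diameter at most $1$ has every pair of vertices adjacent, so it is complete. Because each $G_i$ is a subgraph of $G$, its vertex set $V(G_i)$ induces a clique in $G$. The sets $V(G_1),\ldots,V(G_k)$ partition $V(G)$, so they form a clique cover of $G$. Therefore $k\geq\theta(G)$, which gives $b(G)\geq\theta(G)$, and combined with the upper bound, $b(G)=\theta(G)$.

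The one delicate step is this interpretation of the parts as connected components. If a part were allowed to be disconnected, it would have infinite diameter under the usual convention, so it could never satisfy $\diam(G_i)<\diam(G)$ anyway. Either way each part is a connected subgraph of diameter at most $1$, hence a clique, and the argument goes through.
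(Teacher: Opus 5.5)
Your proposal is correct and follows the paper's proof essentially step for step. For the upper bound you delete all edges between the cliques of a minimum clique cover. For the equality when ${\rm diam}(G)=2$ you note that every part of a correct partition has diameter at most $1$, so its vertex set is a clique, and the parts together form a clique cover. Your explicit observations that a connected non-complete graph has ${\rm diam}(G)\geq 2$, and that a disconnected part could never have smaller diameter, fill in details the paper leaves implicit.
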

  
\begin{proof}
The vertex set of $G$ can be decomposed into $\theta(G)$ disjoint subsets, each being a clique. By removing from $G$ all the edges connecting the $\theta(G)$ distinct induced subgraphs by the cliques, we obtain $\theta(G)$ subgraphs, each with diameter at most 1. Therefore, $b(G) \leq \theta(G)$.

If $\diam(G)=2$, all the subgraphs of any partition determining the Borsuk number $b(G)$ must have diameter at most $1$. Hence, the vertex set of each subgraph must be a clique. Then, any such partition determines a clique cover of $G$; this implies $\theta(G)\leq b(G)$, which gives $\theta(G) = b(G)$ since $G$ is a non-complete graph.
\end{proof}

A reduction to the decision version of the minimum clique cover problem, which  is a classical NP-complete problem \cite{K72}, allows us to establish the analogous result for the Borsuk number.

\begin{theorem}  \label{th:dbnnp}
Let $G=(V,E)$ be a connected graph, and let $k$ be a positive integer number. The problem of deciding whether  $b(G) <k$ is  NP--complete.   
\end{theorem}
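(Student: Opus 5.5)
Membership in NP follows from certificates given by the partition itself. For hardness, I would reduce from the decision version of minimum clique cover, via Lemma~\ref{lem:diam2}, after first transforming an arbitrary instance into one of diameter exactly $2$.

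\textbf{Membership in NP.} A certificate is a partition of $V$ into at most $k-1$ parts, equivalently the set of deleted edges. The verifier computes each part's induced subgraph, using only edges of $G$ inside the part. Deleting further edges inside a part can only increase distances, so keeping every inner edge is without loss of generality. It then runs BFS in each part to get its diameter and compares the maximum against $\diam(G)$, also computed by BFS. All of this is polynomial.

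\textbf{Hardness: the gadget.} Given an instance $(H,k)$ of clique cover (is $\theta(H)\le k$?), I need a graph of diameter $2$ whose clique cover number is controlled by $\theta(H)$. The tempting choice is to add a universal vertex, but that produces diameter $\le 2$ and may collapse the cover structure in the wrong way. Instead I would build $G$ from $H$ by adding two new vertices $a,b$, with $a$ adjacent to every vertex of $H$ and $b$ adjacent only to $a$.

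Then $G$ is connected. It is not complete, since $b$ is nonadjacent to $V(H)$, assuming $H$ is nonempty. Its diameter is exactly $2$: any two vertices of $H$ are at distance at most $2$ via $a$, $b$ reaches $H$ through $a$, and $d(b,v)=2$ for $v\in V(H)$.

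\textbf{Hardness: the clique cover number.} I claim $\theta(G)=\theta(H)+1$.
\begin{itemize}
\item Upper bound: take an optimal cover of $H$ and add the clique $\{a,b\}$.
\item Lower bound: in any clique cover of $G$, the clique containing $b$ is contained in $\{a,b\}$ and so contains no vertex of $H$. The remaining cliques cover $V(H)$, and removing $a$ from whichever clique holds it leaves cliques of $H$, of which there are at least $\theta(H)$.
\end{itemize}

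By Lemma~\ref{lem:diam2}, $b(G)=\theta(G)=\theta(H)+1$. Hence $\theta(H)\le k$ if and only if $b(G)<k+2$, so I output $(G,k+2)$, a polynomial construction.

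\textbf{Main obstacle.} The main point needing care is arranging that the reduced graph has diameter exactly $2$ (not $1$) so that Lemma~\ref{lem:diam2} applies with equality. This is what the pendant vertex $b$ guarantees, and it also requires checking the exact shift of the clique cover number by one. Degenerate inputs, such as $H$ empty or $k$ trivially large, are handled directly.
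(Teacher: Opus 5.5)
Your proof is correct and follows the paper's route: reduce the decision version of minimum clique cover through Lemma~\ref{lem:diam2}, after a gadget forces diameter exactly $2$. The paper simply uses the cone $C_G$, which for non-complete $G$ already has diameter $2$ and satisfies $\theta(C_G)=\theta(G)$, so your worry about the universal vertex is unfounded; your extra pendant vertex shifts $\theta$ by exactly one and also absorbs complete inputs, and your explicit NP-membership argument fills in a step the paper leaves implicit.
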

\begin{proof}
 Let $G$ be a graph such that ${\rm diam}(G)\geq 2$ (otherwise, $G$ is a complete graph, and so $b(G)=|V|$, since all edges must be removed to decrease the diameter from $1$ to zero). 
 
 The \emph{cone} $C_G$ of $G$ is the graph obtained from $G$ by adding a new vertex adjacent to all the vertices in $G$. Note that $b(C_G)\leq b(G)+1$: simply consider the partition determining the Borsuk number of $G$, and remove all previously added edges to obtain a partition with one more subgraph (the new vertex).
 
 Since ${\rm diam}(C_G)=2$, by Lemma~\ref{lem:diam2} we have $\theta(C_G)=b(C_G)$. 
 Further, the graph $G$ has a clique cover of size $k$ if and only if $C_G$ has a clique cover of size $k$. Thus, we have $b(G)\leq \theta(G)=\theta(C_G)=b(C_G)\leq b(G)+1$ (the first inequality given by Lemma~\ref{lem:diam2}). The result then follows from the NP-completeness for the minimum clique cover~\cite{K72}, which is known to be NP-complete even for graphs of diameter 2.
\end{proof}

In the continuous setting, the decision problem for the Borsuk number is related to the  \emph{point line cover problem}: for a given set of points in the plane, the goal is to find a set of lines of minimum cardinality such that each point lies on at least one line. This is a well-studied problem in discrete geometry, which was shown to be NP-hard by Megiddo and Tamir in 1982~\cite{MegiddoT82}. Surprisingly, to the best of our knowledge, variants of the point line cover problem where the objects used to cover the points are disjoint have only been studied recently in~\cite{Grelier22, gms-2026}. The authors in~\cite{gms-2026} use either disjoint line segments (also used in~\cite{Grelier22}) or guillotine cuts (which are, essentially, also disjoint) to cover the points. See Figure~\ref{fig:example-problems} for an example of both problems. In particular, they prove the following theorem, which allows us to prove the NP-completeness of the decision problem for the Borsuk number.

\begin{figure}[htbp]
    \centering
    \includegraphics{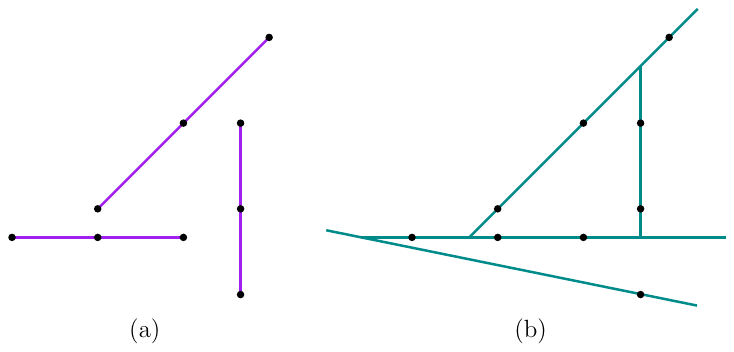}
    \caption{Optimal coverings for $n=9$ points with (a) three disjoint segments, (b) four guillotine cuts (this way of covering inserts at each step either a line or half-line or  segment, with the restriction that each inserted element divides only one existing region into two.)}
    \label{fig:example-problems}
\end{figure}

\begin{theorem}[\cite{gms-2026}] \label{th:guillotine}
Given a set $S$ of points in the plane and a positive integer $k$, the problem of determining if all points in $S$ can be covered by $k$ guillotine cuts is NP-complete.
\end{theorem}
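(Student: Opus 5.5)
This theorem is quoted from \cite{gms-2026}, so the paper will simply cite it. If I had to supply a self-contained argument, the plan would be as follows.

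\textbf{Membership in NP.} A certificate is the ordered sequence of $k$ cuts, each a line, half-line or segment. Each cut can be described by rational data of polynomial size. We may assume every useful cut passes through at least two points of $S$, or through one point of $S$ and ends on a previously placed cut. Hence each cut is determined by coordinates of points of $S$ and earlier cuts, and the bit-size stays polynomial. A verifier then checks two things in polynomial time: that each new cut lies inside a single current region and splits it into two (so it is disjoint from earlier cuts except at its endpoints), and that every point of $S$ lies on some cut.

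\textbf{NP-hardness.} I would reduce from a known NP-hard covering problem with a rigid combinatorial structure. Natural candidates are \textsc{Point Line Cover} restricted to grid-like instances, or a planar 3-SAT variant. The plan is to build gadgets in which the points are arranged in collinear groups lying on a small number of candidate lines per group. The gadgets must ensure two properties:
\begin{enumerate}
\item covering a group with the budgeted number of cuts forces the use of one of exactly two ``canonical'' cut families, encoding true or false;
\item cuts from different gadgets must be nested or separated consistently with the guillotine order.
\end{enumerate}
Variable gadgets would be long collinear chains, and clause gadgets would be points placed at crossings of candidate lines from three variables. A clause point is then coverable without extra cost only if some incident literal's cut reaches it before being blocked by another cut.

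\textbf{Main obstacle.} The key step is controlling the interaction imposed by disjointness and ordering. A cut may be truncated by earlier cuts, so I must show that any optimal solution can be normalized into one where cuts are full lines inside their gadget region. This needs an exchange argument: extending or reordering cuts never increases their number and preserves coverage. Proving that the budget $k$ is tight, i.e.\ that no ``cheating'' cut covering points from several gadgets saves a cut, is where most of the geometric care goes. I would handle it by placing points in general position across gadgets, so that any line through points of two different gadgets covers at most $O(1)$ points.
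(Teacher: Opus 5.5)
Citing \cite{gms-2026} is exactly what the paper does. The theorem is imported without proof and used only as a black box in the reduction for Theorem~\ref{th:cbnnp}, so your first sentence is the paper's entire argument. The self-contained plan you add, however, is not a proof, and parts of it fail as written.

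\textbf{Membership in NP.} Your normalization does not pin down cuts that cover exactly one point $s\in S$. The description ``through one point of $S$ and ends on a previously placed cut'' leaves the slope free, since every cut after the first ends on earlier cuts anyway. If you instead anchor such a cut at a vertex of the current arrangement, coordinates compound along chains of dependent cuts, so the claim that the bit-size stays polynomial is unjustified.

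A correct fix goes as follows. Each region is an intersection of open half-planes of its ancestor cuts, and each cut is the full chord of its line in its region. Assign each point to the first cut covering it. Then, for a fixed guillotine tree, validity depends only on which points lie on which cut lines and on strict side-of-line relations between points of $S$ and ancestor lines. Consequently a one-point cut can be rotated about $s$ within an open arc of directions bounded by directions $\pm(t-s)$ with $t\in S$. Such an arc contains a rational direction of bit size polynomial in that of $S$. Cuts covering no point can be dropped, or replaced by a strict separator of the points in their two subtrees, which an LP finds with polynomial bit size.

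\textbf{NP-hardness.} Nothing is actually reduced: no source problem is fixed, no gadget is specified, and neither direction of the equivalence is argued. Several of the ingredients you name are also problematic.
\begin{enumerate}
\item If ``grid-like'' Point Line Cover means only axis-parallel lines matter, the problem is minimum vertex cover in the bipartite row/column graph. It is therefore polynomial by K\"onig's theorem, so it cannot serve as the source.
\item Hardness of unrestricted line cover does not transfer, since a cover by crossing lines is not a guillotine cover.
\item Your normalization (``extending or reordering'' cuts until they are full lines inside their gadget region) contradicts the guillotine rule. A cut must end on the boundary of its current region, so a truncated cut cannot be extended without crossing an earlier cut. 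Reordering only moves the truncation onto the other cut. Nothing in the instance forces cuts that separate gadgets, so ``gadget regions'' are not even defined.
\item The clause design makes this acute. If three literal lines are concurrent at a clause point, whichever cut through that point is placed first truncates the cuts along the other two lines there. Their points on the far side then need extra cuts that your budget does not account for.
\item Requiring that any line through points of two different gadgets covers only $O(1)$ points contradicts placing clause points on long collinear variable chains.
\end{enumerate}
Controlling these truncations is the real content of a hardness proof, and it is exactly the part your plan leaves open.
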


\begin{theorem} \label{th:cbnnp}
    Let $\Nl$ be a continuous geometric graph, and let $k$ be a positive integer number. The problem of deciding whether  $b(\Nl) <k$ is  NP--complete.
\end{theorem}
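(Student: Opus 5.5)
We reduce from the guillotine-cut covering problem of Theorem~\ref{th:guillotine}. Membership in NP is the easy half: a certificate is an ordered set of fewer than $k$ lines, together with the information of which current subgraph each line splits. Without loss of generality each line can be perturbed or encoded combinatorially, by the edges or vertices it passes through, so its description has polynomial size. Each resulting subgraph is then a plane geometric graph of polynomial size, and its continuous diameter can be computed in polynomial time by the known algorithms for continuous diameter (candidate pairs lie on edges, and each edge-pair is handled in closed form). Comparing these diameters with $\diam(\Nl)$ certifies that the partition is correct. A technical point is the arithmetic of lengths, which involves square roots; to sidestep it I would argue that the instances produced by the reduction only need polynomially many comparisons that can be decided combinatorially.

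For hardness, the instance $S$ of points becomes a disconnected continuous graph, as anticipated in Section~\ref{sec:cbn}. Each point $s\in S$ is replaced by a tiny edge, or a tiny cross, $\Nl_s$ of length $\varepsilon$ centred at $s$. Then $\Nl=\bigcup_s \Nl_s$ has infinite diameter, and a partition is correct if and only if every final subgraph is connected, that is, has finite diameter. The key claim is that $b(\Nl)\le k+1$ if and only if $S$ can be covered by $k$ guillotine cuts; this yields the decision problem ``$b(\Nl)<k+2$''. The only way components get connected is through the segments $s_{\ell}$. A subgraph containing two gadgets must have them joined via inserted segments, and each gadget must therefore be touched by some line. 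For the direction from partitions to covers, I take the lines and restrict each one to the region of the current subgraph it splits. This restricted piece is a line, a half-line or a segment that splits only one region, so the family of pieces is a guillotine cut sequence, and every gadget, hence every point (after shrinking $\varepsilon$), is hit. For the converse, from a guillotine covering with $k$ cuts I build $k$ lines in the same order. Each line $\ell_i$ cuts the current subgraph containing the region that the $i$-th cut splits. The segment $s_{\ell}$ spans the extreme intersection points, so all gadgets met by that cut become connected to each other.

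The main obstacle is showing that the final pieces are connected in the converse direction. A gadget hit by a line is split into parts that lie on both sides, and these parts are reattached through $s_{\ell}$ to both halves, which is fine. The danger is that a region of the guillotine subdivision may contain several gadgets that meet different cuts, and then the subgraph must still be connected through the union of its boundary segments. I would handle this by using crosses as gadgets, with arms long enough that every gadget is cut. I would also choose $\varepsilon$ small and put $S$ in general position, so that covering and hitting coincide. If connectivity still fails, I would adjust the correspondence to ``$b(\Nl)\le k+1$ iff cover by $k$ cuts'' and check carefully the count of subgraphs, $k$ lines giving $k+1$ pieces, together with the requirement that every piece is nonempty and connected.
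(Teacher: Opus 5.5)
\textbf{Verdict: there is a genuine gap.} Your reduction has the same skeleton as the paper's. The paper takes $\mathcal{G}$ to be the empty graph on $S$ and asserts directly that $b(\mathcal{G})-1$ equals the minimum number of guillotine cuts covering $S$. The step you single out as the main obstacle is real, and none of your remedies closes it: a guillotine cover need not produce connected parts.

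\emph{Counterexample.} Take $a=(0,0)$, $b=(1,0)$, $c=(3,1)$, $d=(4,2)$.
The lines $ad$ and $bc$ are parallel, so the line $ad$ followed by the line $bc$ (inside the lower half-plane) is a guillotine cover with two cuts.
In the graph, cutting along $ad$ leaves the lower part $\{b,c\}\cup\overline{ad}$, and cutting it along $bc$ leaves the middle part $\overline{ad}\cup\overline{bc}$, which is disconnected.
A short case analysis shows that no two lines work. The part that is not cut again must already have finite diameter. This forces the first line either to isolate a single point, leaving three non-collinear points for one line, or to pass through two points with the other two on one side. In the latter case the second line must pass through those two points and meet the first segment, which fails for all three pairings.
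The lines $ab$, $ac$, $cd$ (in that order) do work, so $b(\mathcal{G})=4$ although the cover number is $2$.

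Parallelism is not essential. The line $ab$ followed by the ray of line $cd$ above it is also a valid cover, but that ray starts at $(2,0)\notin\overline{ab}$, so $s_\ell=\overline{cd}$ never reaches $\overline{ab}$.
Your proposed fixes do not help. Sufficiently small crosses behave exactly like points in this example, long arms would change the covering instance, and you cannot assume general position for the hard instances of Theorem~\ref{th:guillotine}.
So the claim ``$b(\mathcal{G})\le k+1$ iff $S$ has a guillotine cover with $k$ cuts'' is false as stated. The paper's proof asserts exactly this equivalence without discussing connectivity, so the missing step cannot be borrowed from it.

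\emph{Problems specific to your gadgets.} The paper's isolated vertices avoid these.
First, a line can meet a gadget without passing through its centre. Since $\varepsilon$ is fixed by the reduction, ``every point (after shrinking $\varepsilon$) is hit'' does not follow.
Second, a gadget that ends up alone in a final part has finite diameter, so it need not be touched at all, contrary to your claim.

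\emph{A possible repair.} Keep the points as isolated vertices and add a large square cycle $\partial B$ with $S$ in the interior of $B$.
By induction, every non-degenerate current subgraph is then the boundary of a convex region $R\cap B$ together with the points of $S$ in its interior.
A line through the interior meets this boundary in exactly two points, so $s_\ell$ is exactly the guillotine chord $\ell\cap R\cap B$.
Consequently, a final part has finite diameter if and only if no point of $S$ lies strictly inside its region.
This gives $b(\mathcal{G})-1$ equal to the guillotine cover number in both directions. Restricting cuts to $B$ does not change that number, and singleton parts can no longer occur.

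Finally, your NP-membership sketch does not actually deal with the square-root comparisons; the paper does not address membership either.
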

\begin{proof}
    Let $\Nl$ be the empty graph on $n$ vertices; since $\Nl$ is non-connected, its diameter is infinity. If we insert a line $\ell$ to ``partition" $\Nl$ into the graphs $\Nl_{\ell}^+$ and $\Nl_{\ell}^-$, the result is that we connect a number of vertices of $\Nl$  with the segment $s_{\ell}$. The graphs $\Nl_{\ell}^+$ and $\Nl_{\ell}^-$ consist of the segment $s_{\ell}$ (duplicated to belong to each graph) and the isolated vertices in, respectively, $\ell^+$ and $\ell^-$, which have not been connected by inserting $\ell$. While any isolated vertices remain after partitioning, the diameter of the corresponding graph will be the same as that of $\Nl$ (infinity). Moreover, each inserted line $\ell$ divides only one of the existing graphs into two graphs of the type $\Nl_{\ell}^+$ and $\Nl_{\ell}^-$, and therefore $b(\Nl)-1$ is the minimum number of lines $\ell$ needed to cover all isolated vertices. In fact, the objects we are using to cover the vertices of $\Nl$ can be seen as a line, half-lines, and segments, where each object is inserted in only one existing region and divides it into two regions. This is a covering by guillotine cuts; thus, $b(\Nl)-1$ is the minimum number of guillotine cuts needed to cover all the points (vertices of $\Nl$.) The result then follows from Theorem~\ref{th:guillotine}.
    \end{proof}
    
\section{Borsuk number of monotone continuous geometric graphs}\label{sec:monotone}

We further explore the problem of how large the Borsuk number of a continuous graph might be by studying this parameter for a large class of continuous geometric graphs, those that are \emph{monotone}. Our main result in this section provides an upper bound on their Borsuk number (Theorem~\ref{thm:k-disjoint}).

Let $\Nl$ be continuous geometric graph, and let $\Nl\cup \mathcal{F}_I$ be the part of the plane determined by the graph itself and all its interior faces. The graph $\Nl$ is said to be \emph{monotone with respect to a line $\ell$} or \emph{$\ell$-monotone} if the intersection of any line perpendicular to $\ell$ with $\Nl\cup \mathcal{F}_I$ is either empty, a single point, or a segment; Figure~\ref{fig:monotone_chain} shows an example. Without loss of generality, throughout this section we assume that the line $\ell$ is horizontal, and thus $\ell^\perp$ is vertical.

For an $\ell$-monotone geometric continuous graph $\Nl$, and a vertical line $\ell^\perp$ that is being translated from left to right (parameterized by the $x$-coordinate), we define the functions: 

$$h^+(x):={\rm diam}\left(\Nl^+_{\ell^\perp(x)}\right) \quad {\rm and} \quad h^-(x):={\rm diam}\left(\Nl^-_{\ell^\perp(x)}\right).$$

\begin{figure}
\centering
	\includegraphics{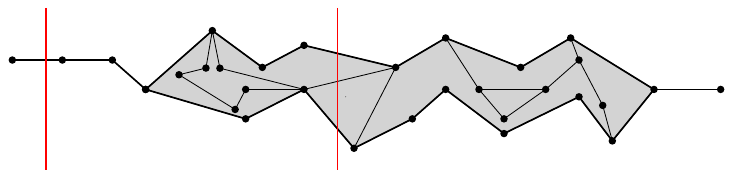}
     \caption{A continuous geometric graph $\Nl$ that is monotone with respect to the $x$-axis; $\Nl\cup \mathcal{F}_I$ consists of the graph (in black) and the gray region. Red vertical lines either intersect $\Nl\cup \mathcal{F}_I$ at a single point or at a segment.}
	\label{fig:monotone_chain}
\end{figure}

\begin{lemma}
\label{cor:contains}
The function $h^-(x)$ (resp., $h^+(x)$) is monotone and increasing (resp., decreasing).
\end{lemma}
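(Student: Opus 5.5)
Fix $x_1<x_2$ and write $\mathcal{L}_1=\Nl^-_{\ell^\perp(x_1)}$, $\mathcal{L}_2=\Nl^-_{\ell^\perp(x_2)}$. The goal is $\diam(\mathcal{L}_1)\le\diam(\mathcal{L}_2)$. The claim for $h^+$ follows from the same argument after reflecting the plane across a vertical line.

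By $\ell$-monotonicity, $\ell^\perp(x)\cap(\Nl\cup\mathcal{F}_I)$ is empty, a point, or a segment. So $s_{\ell^\perp(x)}$ is exactly this vertical segment $\sigma(x)$ spanning the region $\Nl\cup\mathcal{F}_I$ at abscissa $x$, and $\mathcal{L}_x$ is the part of $\Nl$ left of $x$ closed off by $\sigma(x)$. The first step is to observe that $\mathcal{L}_1$ is, up to distances, obtained from $\mathcal{L}_2$ by cutting along $\sigma(x_1)$: inside $\mathcal{L}_2$ the vertical segment $\sigma(x_1)$ is a chord of the region, and $\mathcal{L}_1=(\mathcal{L}_2)^-_{\ell^\perp(x_1)}$. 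It therefore suffices to show that cutting a monotone graph $\mathcal{H}$ with a vertical line never increases the diameter of the left piece, i.e.\ $\diam(\mathcal{H}^-)\le\diam(\mathcal{H})$ when $\mathcal{H}^-$ is the left piece.

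\textbf{Key step.} I plan to build a map $\pi\colon\mathcal{H}\to\mathcal{H}^-$ that is the identity on $\mathcal{H}^-\cap\mathcal{H}$ and does not shrink distances in the needed direction. Concretely, I want to show that every pair $p,q\in\mathcal{H}^-$ has a pair $p',q'\in\mathcal{H}$ with $d_{\mathcal{H}^-}(p,q)\le d_{\mathcal{H}}(p',q')$. The plan is the following.
\begin{enumerate}
\item For $p,q$ both in $\Nl$ and left of the line, note that $\mathcal{H}^-$ adds the segment $\sigma$ but deletes the part of $\mathcal{H}$ to the right. Any path in $\mathcal{H}$ between $p,q$ that goes right of the line must cross the line twice, at points $a,b$ on the boundary of the region. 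By monotonicity, $a$ and $b$ lie on $\sigma$, so replacing the excursion by the subsegment $ab$ of $\sigma$ gives a path in $\mathcal{H}^-$ that is no longer. This yields $d_{\mathcal{H}^-}(p,q)\le d_{\mathcal{H}}(p,q)$, which is the wrong direction for a diameter bound.
\item So the real argument must handle points on $\sigma$ and the fact that distances can shrink. I would instead compare against the original $\Nl$. A diametral pair $p,q$ of $\mathcal{L}_1$, where at least one point may lie on $\sigma(x_1)$, is matched to a pair in $\mathcal{L}_2$ in the spirit of the proof of Proposition~\ref{prop:firstbound}. A point $p\in\sigma(x_1)$ is slid horizontally or along the boundary into $\mathcal{L}_2$ so that its distance to $q$ only grows.
\end{enumerate}

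\textbf{Main obstacle.} I expect step 2 to be the hardest part: showing that the shortcut $\sigma(x_1)$ cannot create larger distances than those already realized in $\mathcal{L}_2$. I would first try the following argument. Any shortest path in $\mathcal{L}_1$ between $p,q$ is also a path in $\mathcal{L}_2\cup\sigma(x_1)$. Given a diametral pair of $\mathcal{L}_1$, consider the farthest point from $p$ in $\mathcal{L}_2$ and use the triangle inequality together with the endpoints of $\sigma(x_1)$, which lie on $\Nl$, to bound $d_{\mathcal{L}_1}(p,q)$ by a distance in $\mathcal{L}_2$. If this fails, I would fall back on a case analysis on whether the diametral path of $\mathcal{L}_1$ uses $\sigma(x_1)$, mirroring the trapezoid and triangle cases of Proposition~\ref{prop:firstbound}.
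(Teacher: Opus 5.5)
Your two ingredients are the paper's: compare distances for points of $\Nl$ directly, then push a point of the added segment horizontally to the right. But you discard the first and leave the second unproved. You call Step~1 ``the wrong direction for a diameter bound,'' but it is exactly the inequality you need, and it is the first half of the paper's proof. To get $\diam(\mathcal{L}_1)\le\diam(\mathcal{L}_2)$ you must dominate every distance in $\mathcal{L}_1$ by some distance in $\mathcal{L}_2$. Step~1 gives $d_{\mathcal{L}_1}(p,q)\le d_{\mathcal{L}_2}(p,q)\le\diam(\mathcal{L}_2)$ for all $p,q\in\Nl$ with abscissa at most $x_1$. Distances shrinking in the cut piece work for you, not against you. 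The same reversal appears in your map $\pi$, which should go from $\mathcal{H}^-$ to $\mathcal{H}$. Also, the crossing points $a,b$ lie on $\sigma(x_1)$ simply by the definition of $s_{\ell}$; monotonicity plays no role there.

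The genuine gap is the one case where $\ell$-monotonicity is actually needed: a diametral point $q$ of $\mathcal{L}_1$ lying on $\sigma(x_1)\setminus\Nl$. You assert that sliding $q$ horizontally makes its distance ``only grow,'' but give no argument, and your last paragraph only lists strategies to try. The trapezoid/triangle analysis of Proposition~\ref{prop:firstbound} relies on strips containing no vertices and does not transfer. Once Step~1 is kept, the missing argument is short. Monotonicity gives $\sigma(x_1)\subset\Nl\cup\mathcal{F}_I$, so $q$ lies in a bounded face. Hence the rightward horizontal ray from $q$ meets $\mathcal{L}_2$ (an edge of $\Nl$, or else $\sigma(x_2)$) at a point $r$ with abscissa in $(x_1,x_2]$. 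For $p\in\Nl$, let $c$ be the last point on $\ell^\perp(x_1)$ of a shortest $p$--$r$ path in $\mathcal{L}_2$. Then $c\in\Nl\cap\sigma(x_1)$ and
\[
d_{\mathcal{L}_2}(p,r)=d_{\mathcal{L}_2}(p,c)+d_{\mathcal{L}_2}(c,r)\ \ge\ d_{\mathcal{L}_1}(p,c)+|cr|\ >\ d_{\mathcal{L}_1}(p,c)+|cq|\ \ge\ d_{\mathcal{L}_1}(p,q).
\]
This uses Step~1, the fact that graph distance dominates Euclidean distance, and $|cr|>|cq|$, which holds because $c$ and $q$ share abscissa $x_1$ while $r$ is displaced horizontally from $q$. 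If instead $p$ also lies on $\sigma(x_1)\setminus\Nl$, then $d_{\mathcal{L}_1}(p,q)\le|pq|\le|uv|\le d_{\mathcal{L}_2}(u,v)$, where $u,v$ are the endpoints of $\sigma(x_1)$. This is what the paper does with its point $r$; without this step or an equivalent one, your proposal does not establish the lemma.
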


\begin{proof}
We argue with $h^-(x)={\rm diam}\left(\Nl^-_{\ell^\perp(x)}\right)$; the argument for 
$h^+(x)$ is analogous.

Let $\ell^\perp(x_1)$ and $\ell^\perp(x_2)$, with $x_1<x_2$, be two vertical lines that intersect $\Nl$. 
As infinite point sets, $(\ell^\perp(x_1)^- \cap \Nl)\subset (\ell^\perp(x_2)^-\cap \Nl)$. Recall that $\Nl^-_{\ell^\perp(x_1)}=(\ell^\perp(x_1)^- \cap \Nl)\cup s_{\ell^\perp(x_1)}$ and $\Nl^-_{\ell^\perp(x_2)}=(\ell^\perp(x_2)^- \cap \Nl)\cup s_{\ell^\perp(x_2)}$. Refer to Figure~\ref{fig:monotone_chain3}. We claim that $h^-(x_1)\leq h^-(x_2)$.

Consider the endpoints $u,v$ of segment $s_{\ell^\perp(x_1)}$ in $\Nl^-_{\ell^\perp(x_1)}$.
When moving the line from $\ell^\perp(x_1)$ to $\ell^\perp(x_2)$,  segment $s_{\ell^\perp(x_1)}$ disappears, but $u$ and $v$ remain connected through a path in $\Nl^-_{\ell^\perp(x_2)}$, whose  length cannot be smaller than that of $s_{\ell^\perp(x_1)}$.
The same happens for any two points of $s_{\ell^\perp(x_1)} \cap \Nl$ that are connected by part of the segment $s_{\ell^\perp(x_1)}$  (instead of the whole segment), see Figure~\ref{fig:monotone_chain3}a.
Thus, the distance in $\Nl^-_{\ell^\perp(x_2)}$  between any two points of $\Nl^-_{\ell^\perp(x_1)} \setminus \{s_{\ell^\perp(x_1)}\}\subset \Nl^-_{\ell^\perp(x_2)}$ is at least their distance in $\Nl^-_{\ell^\perp(x_1)}$ (notice that the shortest paths connecting the points may not use the lines).
In particular, if there exists a diametral pair $p,q$ of $\Nl^-_{\ell^\perp(x_1)}$  located on $\Nl^-_{\ell^\perp(x_1)}\setminus\{s_{\ell^\perp(x_1)}\}$, we have $d_{\Nl^-_{\ell^\perp(x_1)}}(p,q)\leq d_{\Nl^-_{\ell^\perp(x_2)}}(p,q)$. This implies ${\diam}(\Nl^-_{\ell^\perp(x_1)})\leq {\rm diam}(\Nl^-_{\ell^\perp(x_2)})$.

Suppose now that all diametral pairs $p,q$ of $\Nl^-_{\ell^\perp(x_1)}$  satisfy that one of the points, say $q$, is on $s_{\ell^\perp(x_1)}$.
The fact that $\Nl$ is $\ell$-monotone guarantees that $q$ belongs to an interior face of $\Nl^-_{\ell^\perp(x_2)}$. 

If there is a pair $p,q$ such that the point $q$ is not located on the boundary of the outer face of $\Nl$, we can take the horizontal line passing through $q$, which will eventually intersect some vertex or edge at a point $r\in\Nl^-_{\ell^\perp(x_2)}\setminus \Nl^-_{\ell^\perp(x_1)}$; see Figure~\ref{fig:monotone_chain3}b. 
We have  $d_{\Nl^-_{\ell^\perp(x_1)}}(p,q) < d_{\Nl^-_{\ell^\perp(x_2)}}(p,r)$; indeed, any path from $p$ to $r$ must cross $\ell^\perp(x_1)$, and, since $q$ lies on $\ell^\perp(x_1)$, $q$ is closer to that crossing point than $r$. Hence, ${\diam}(\Nl^-_{\ell^\perp(x_1)})\leq {\rm diam}(\Nl^-_{\ell^\perp(x_2)})$. 

Otherwise, for every diametral pair $p, q$ of $\Nl^-_{\ell^\perp(x_1)}$, the point $q$ lies on $s_{\ell^\perp(x_1)}$ and  on the boundary of the outer face of $\Nl$, then the point $r$ may not exist. In that case, one can simply move along the boundary of this outer face to obtain a point at a larger distance than $q$ (from $p$) in $\Nl^-_{\ell^\perp(x_2)} \setminus \Nl^-_{\ell^\perp(x_1)}$. Notice that the monotonicity hypothesis guarantees that this is the only situation in which the point $r$ may not exist.

Finally, note that $p$ and $q$ cannot be both on $s_{\ell^\perp(x_1)}$ since they are diametral. Thus, in every case, we have 
${\diam}(\Nl^-_{\ell^\perp(x_1)})\leq {\rm diam}(\Nl^-_{\ell^\perp(x_2)})$, that is, $h^-(x_1)\leq h^-(x_2)$. 
\end{proof}

\begin{figure}
\centering
	\includegraphics{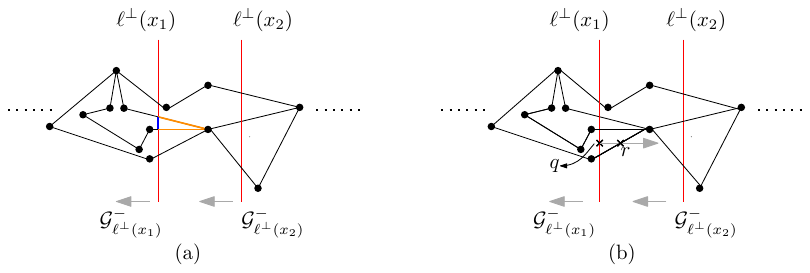}
     \caption{A portion of the monotone graph $\Nl$ in Figure~\ref{fig:monotone_chain}; we compare distances in $\Nl^-_{\ell^\perp(x_1)}$ with distances in $\Nl^-_{\ell^\perp(x_2)}$: (a) the orange path in $\Nl^-_{\ell^\perp(x_2)}$ is longer than the blue segment in $\Nl^-_{\ell^\perp(x_1)}$; (b) finding the point $r$ in $\Nl^-_{\ell^\perp(x_2)}\setminus \Nl^-_{\ell^\perp(x_1)}$.  }
	\label{fig:monotone_chain3}
\end{figure}

Note that, in general, the diameter can increase when partitioning a continuous geometric graph with a line (for example,  partition the graph in Figure~\ref{fig:diameter_shortcut_worse}a with a horizontal line, as in Figure~\ref{fig:diameter_shortcut_worse}b, but moved slightly above the two vertices.) As a straightforward consequence of Lemma~\ref{cor:contains}, we conclude that this cannot happen in graphs that are monotone, which is key to upper bound their Borsuk number.

\begin{corollary}\label{th:convexdiameter}
The associated functions $h^+(x)$ and $h^-(x)$ of an $\ell$-monotone continuous geometric graph $\Nl$ are  upper bounded by $\diam(\Nl)$.
\end{corollary}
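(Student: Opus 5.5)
We derive the bound from the monotonicity in Lemma~\ref{cor:contains} by pushing the cutting line to the far end of the graph, where the relevant piece is essentially all of $\Nl$. We argue for $h^-(x)$; the case of $h^+(x)$ is symmetric, with the roles of left and right exchanged.

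Let $x_{\min}$ and $x_{\max}$ be the smallest and largest $x$-coordinates of points of $\Nl$. The function $h^-$ is only meaningful for $x\in(x_{\min},x_{\max})$, where the vertical line $\ell^\perp(x)$ actually meets $\Nl$. By Lemma~\ref{cor:contains}, $h^-$ is nondecreasing on this interval. Hence for every such $x$,
$$h^-(x)\le \lim_{x'\to x_{\max}^-} h^-(x') = \sup_{x'<x_{\max}} h^-(x').$$
It therefore suffices to show that this limit is at most $\diam(\Nl)$.

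Fix $x'$ close to $x_{\max}$. By $\ell$-monotonicity, $\ell^\perp(x')\cap(\Nl\cup\mathcal{F}_I)$ is a single segment or a point. So $s_{\ell^\perp(x')}$ is exactly that vertical chord of $\Nl\cup\mathcal{F}_I$. The graph $\Nl^-_{\ell^\perp(x')}$ is then $\Nl$ minus a thin sliver $\Nl\cap\{x\ge x'\}$, plus that chord. As $x'\to x_{\max}$, the sliver shrinks to the set $\Nl\cap\{x=x_{\max}\}$, which is a vertex or a vertical edge. The chord's length tends to the length of that set, so the chord is either tiny or converges to an existing vertical edge of $\Nl$.

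We compare distances in $\Nl^-_{\ell^\perp(x')}$ with distances in $\Nl$ itself, using the same argument as the first part of the proof of Lemma~\ref{cor:contains}.
\begin{itemize}
\item For points of $\Nl$ lying left of $\ell^\perp(x')$, any path in $\Nl$ that crossed into the sliver can be replaced by a path of no greater length through the chord, because a straight segment is no longer than the path it bypasses. So these distances do not exceed those in $\Nl$.
\item For a point $q$ on the chord, choose a nearby point $r$ of $\Nl$ in the sliver. Then $d(p,q)\le d_\Nl(p,r)+o(1)$.
\end{itemize}
Consequently $h^-(x')\le\diam(\Nl)+o(1)$, and letting $x'\to x_{\max}$ gives the desired bound.

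The main obstacle is justifying the point-on-chord case uniformly, so that the error genuinely vanishes in the limit. A cleaner alternative is to take $x_2=x_{\max}$ directly in Lemma~\ref{cor:contains}, reading $\Nl^-_{\ell^\perp(x_{\max})}$ as $\Nl$ itself (the line touches only a vertex or an edge already in $\Nl$, so $s_\ell\subseteq\Nl$). This gives $h^-(x)\le h^-(x_{\max})=\diam(\Nl)$ at once. I would present this version, checking that the proof of Lemma~\ref{cor:contains} does not use that $\ell^\perp(x_2)$ meets the interior of $\Nl$.
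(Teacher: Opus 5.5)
Your proposal is correct, and your preferred version is exactly the paper's argument. The paper treats the corollary as an immediate consequence of Lemma~\ref{cor:contains}: $h^-(x)\le h^-(x_{\max})=\diam(\Nl)$ and symmetrically $h^+(x)\le h^+(x_{\min})=\diam(\Nl)$, because at the extreme line the cut piece is $\Nl$ itself. The limiting argument is unnecessary, and you do not need to re-check the lemma's proof either, since the lemma is stated for any two vertical lines that intersect $\Nl$, which includes $\ell^\perp(x_{\max})$.
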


To reach the main result in this section, it remains to introduce one final ingredient: the concept of \emph{diametral set}. 

For any continuous geometric graph $\Nl$ (not necessarily monotone), the diametral set $D_{\Nl}(p,q)$ associated to a diametral pair $p,q$ of $\Nl$ is defined as the union of all the shortest $pq$-paths. The set $D_{\Nl}(p,q)$ is a subset of $\Nl$. Notice that, for example, a cycle has an infinite number of diametral pairs of points (see Figure~\ref{fig:infinite_diameters}), but only one distinct diametral set, which is the whole cycle (the union of the two diametral paths for each diametral pair is the same, the cycle). Thus, while a graph can have an infinite number of diametral pairs of points, this is not the case for diametral sets, as we prove in the following lemma.

\begin{lemma} \label{lem:diamsets}
 Let $\Nl$ be a continuous geometric graph with $n$ vertices. The number of distinct diametral sets of $\Nl$ is in $O(n^2)$.
\end{lemma}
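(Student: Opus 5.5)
The plan is to index every diametral set by a pair of edges of $G$ and then to show that a single pair of edges can only produce $O(1)$ distinct diametral sets. Since $G$ is plane, it has $O(n)$ edges, so there are $O(n^2)$ pairs of edges, and the bound follows.

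\textbf{Step 1: diametral points in the interior of an edge.} First I would record a classical fact about continuous graphs. Suppose $p$ lies in the interior of an edge $e=uv$ and $(p,q)$ is a diametral pair. Then $p$ must be a local maximum of the function $f_q(x)=d_{\Nl}(x,q)$ restricted to $e$. This function has the form
\[
f_q(x)=\min\{\lambda+d_{\Nl}(u,q),\ |e|-\lambda+d_{\Nl}(v,q)\}
\]
when $q\notin e$ (the case $q\in e$ is handled separately). So $f_q$ is the lower envelope of two lines of slopes $\pm1$, and its maximum on $e$ is attained at a unique point. At that point the two branches are equal, so there are shortest $pq$-paths leaving $p$ through $u$ and through $v$. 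The same holds symmetrically for $q$ on its edge $e'$.

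\textbf{Step 2: the diametral set is determined by the pair of edges.} Fix the pair $(e,e')$ with $p\in e$ and $q\in e'$ (either point may be a vertex, in which case it is treated as lying on an incident edge). For interior points, the diametral pairs on $(e,e')$ satisfy the equations
\[
\lambda+d(u,\cdot)=|e|-\lambda+d(v,\cdot).
\]
Combined with the diameter condition
\[
d(p,q)=\min_{a\in\{u,v\},\,b\in\{u',v'\}}\bigl(d(p,a)+d(a,b)+d(b,q)\bigr)=\diam(\Nl),
\]
this gives a system of piecewise-linear equations in the two parameters $(\lambda,\mu)$. Using the balancing conditions at both ends, I would show that the solution set is either a single point or a segment along which $\lambda+\mu$ is constant; the latter is the cycle-like situation of Figure~\ref{fig:infinite_diameters}.

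\textbf{Step 3: sliding does not change the diametral set.} Along such a segment of solutions, the combinatorial type is fixed: the set of attaining terminal pairs $(a,b)$ does not change. I would argue that the union of shortest $pq$-paths is then always the same set, namely $e\cup e'$ together with the union of the shortest $ab$-paths for the attaining pairs $(a,b)$. Since there are only four terminal pairs, each pair of edges contributes at most $O(1)$ distinct diametral sets, giving $O(n^2)$ in total.

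\textbf{Main obstacle.} The hard part is Step 3: proving that as $p$ and $q$ slide simultaneously, the union of shortest paths is invariant, and in particular that no portions of $e$ or $e'$ appear or disappear. I would handle this by checking that the subpaths from $p$ to $u,v$ and from $q$ to $u',v'$ together cover $e$ and $e'$ entirely whenever both ends are balanced. The degenerate cases, where $p$ or $q$ is a vertex or $p$ and $q$ share an edge, contribute only $O(n)$ additional sets.
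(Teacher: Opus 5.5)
Your route is the paper's: index diametral pairs by the pair of edges (or vertex and edge) carrying them, use the fact that a diametral point interior to an edge has shortest paths leaving through both endpoints (your Step~1, which the paper cites from earlier work), and use planarity to get $O(n)$ edges. The one real difference is the count per pair of edges. The paper asserts that all diametral pairs on the same two edges define the \emph{same} diametral set. You allow $O(1)$ sets, one for each possible set $A$ of tight terminal pairs.

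Your weaker count is the correct one. But the argument you give for it in Step~3 is false: you claim $A$ stays fixed as $p,q$ slide, so the union of shortest paths is invariant. The paper's stronger claim fails for the same reason.

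Take the unit square $u=(0,0)$, $v=(1,0)$, $u'=(1,1)$, $v'=(0,1)$ together with the diagonal $uu'$. Its diameter is $2$: points on the square are at distance at most $2$, and a point of the diagonal lies on a common triangle of perimeter $2+\sqrt2$ with any other point. For $p=(\lambda,0)$ and $q=(1-\lambda,1)$ one gets $d(p,q)=\min\{2,\ \sqrt2+2\lambda\}$, so these pairs are diametral for $\lambda\in[1-\sqrt2/2,1]$. At $\lambda=1-\sqrt2/2$, with both points interior to their edges, the route through the diagonal is also shortest, and the diametral set is the whole graph. At $\lambda=1/2$ it is only the square. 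So the ``main obstacle'' you identify cannot be overcome: the set of tight pairs can grow at the ends of a sliding family.

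You do not need the invariance, nor Step~2. What your covering check actually proves is a pointwise identity. Let $p,q$ be diametral and interior to distinct edges $e=uv$, $e'=u'v'$. Every shortest $pq$-path consists of $[p,a]$, a shortest $ab$-path, and $[b,q]$ for a tight pair $(a,b)$. Step~1 forces both $u,v$ and both $u',v'$ to occur among the tight pairs, hence
\[
D(p,q)=e\cup e'\cup\bigcup_{(a,b)\in A}S(a,b),
\]
where $S(a,b)$ is the union of shortest $ab$-paths. This depends only on $(e,e',A)$, which gives at most $2^4$ sets per pair of edges with no continuity argument.

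Finally, the degenerate cases do not contribute only $O(n)$ sets. In a star with equal edge lengths, all $\binom{n-1}{2}$ pairs of leaves are diametral, and their diametral sets are pairwise distinct. These cases are still harmless. A vertex--vertex pair determines its set. For a vertex $w$ and $q$ interior to $e'$ (with $w$ not an endpoint of $e'$), Step~1 forces $D(w,q)=e'\cup S(w,u')\cup S(w,v')$. So these cases add $O(n^2)$ sets, and the bound stands.
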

\begin{proof}
There always exists a diametral pair of $\Nl$ formed by either: (i) two vertices, (ii) two points on distinct non-pendant edges,\footnote{A \emph{pendant vertex} is a vertex of degree 1. A \emph{pendant edge} is that incident with a pendant vertex.} or (iii) a pendant vertex and a point on a non-pendant edge (see, for instance,~\cite[Lemma 6]{CACERES2018192}). Figure~\ref{fig:cases} illustrates these types of diametral pairs of points of $\Nl$.
Moreover, two distinct diametral pairs lying on the same two edges have the same diametral paths, that is, they define the same diametral set:  indeed, if a point $p$ is part of a diametral pair $p,q$, and lies on a non-pendant edge, then there exist two shortest $pq$-paths, each passing through one of the endpoints of the edge (see~\cite[Lemma 5]{CACERES2018192}). Thus, every diametral pair of points of $\Nl$ can be seen as a pair that may be (i) vertex-vertex, (ii) edge-edge, or (iii) vertex-edge. 
As a consequence, we obtain that there are $O(n^2)$ distinct diametral sets.
\end{proof}

\begin{figure}
\centering
    \includegraphics{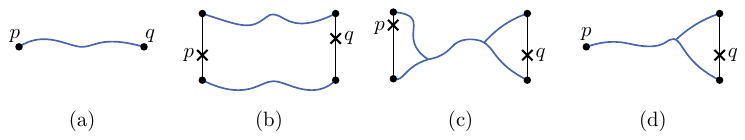}
     \caption{Diametral pairs of points of a continuous graph, where (a) is of type (i), (b) and (c) are of type (ii), and (d) is of type (iii).}
	\label{fig:cases}
\end{figure}

Next, we establish a linear upper bound (in the number of disjoint diametral sets) for the Borsuk number of monotone  graphs.

\begin{theorem}
\label{thm:k-disjoint}
    Let $\Nl$ be an $\ell$-monotone continuous geometric graph, and let $k\geq 1$. 
    If $\Nl$ has at most $k$ disjoint diametral sets, 
    then $b(\Nl) \leq k+2$.
\end{theorem}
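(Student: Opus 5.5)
We plan to partition $\Nl$ using only vertical lines (perpendicular to $\ell$), so that every piece is itself a vertical ``slab'' of $\Nl$ closed off by the segments $s_{\ell^\perp}$ on its sides. Two facts drive the argument. First, each piece is $\ell$-monotone. Second, Lemma~\ref{cor:contains} and Corollary~\ref{th:convexdiameter} apply, in the form that shrinking a slab can only decrease its diameter, and in particular never above $\diam(\Nl)$. The goal is to place the cuts so that no piece contains, with its original length, a full shortest path between a diametral pair of $\Nl$. The number $k$ of disjoint diametral sets will control how many cuts are needed.

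The key steps are as follows.

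\textbf{Step 1.} Every diametral set $D$ is a union of shortest $pq$-paths. By monotonicity it spans an $x$-interval $I_D=[a_D,b_D]$. A vertical line cutting strictly inside $I_D$ separates $p$ from $q$ (or cuts every shortest $pq$-path), so $D$ is not contained in either piece.

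\textbf{Step 2.} Two diametral sets that intersect have overlapping intervals. Among at most $k$ pairwise disjoint diametral sets, pick a maximal disjoint family $D_1,\dots,D_k$. Every other diametral set meets some $D_i$.

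\textbf{Step 3.} Order the intervals and choose one vertical cut inside each group. This gives at most $k$ cuts, each interior to the relevant intervals. We add one extra cut on each side near the extreme $x$-values where needed, to handle diametral sets that meet several $D_i$'s or that sit near the ends. This yields at most $k+1$ lines and $k+2$ pieces.

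\textbf{Step 4.} Show that each piece $\Nl_j$ has $\diam(\Nl_j)<\diam(\Nl)$. Suppose instead that equality held. Then there would be a diametral pair $p',q'$ of $\Nl_j$ at distance $\diam(\Nl)$. Repeat the argument of Lemma~\ref{cor:contains}: slide the bounding lines outward to the original graph. This produces points of $\Nl$ at distance at least $\diam(\Nl)$. Hence some diametral set of $\Nl$ would lie entirely inside the slab, or would use the added segments $s_{\ell^\perp}$, which are strictly shorter than the replaced paths. This contradicts Step 3.

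The main obstacle is Step 3 together with the strictness in Step 4. Diametral sets that are not among the chosen $D_i$ may intersect several of them, so their intervals may straddle gaps between the cuts. We must ensure that one cut per disjoint set, plus the two extra ones, still meets every diametral interval in its interior. For this we would use a greedy interval-stabbing argument: sort the diametral intervals by right endpoint and stab just left of each uncovered right endpoint. We would then bound the number of stabs by the maximum number of pairwise disjoint diametral sets, plus a boundary correction of at most one cut. Making this bound exact, and showing that a diametral set whose interval is cut cannot realize the old diameter inside a piece using $s_{\ell^\perp}$, is where the real work lies.
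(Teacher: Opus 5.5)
You end up on the same route as the paper: cut only with lines perpendicular to $\ell$, use Corollary~\ref{th:convexdiameter} so that no cut can raise the diameter, project every diametral set onto $\ell$, and stab the resulting intervals greedily in order of right endpoints. The gap is that the step carrying the whole content of the theorem is never proved; you explicitly defer it. That step is bounding the number of stabbing lines by the number of pairwise disjoint diametral sets.

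Your first version of Step~3 also does not work as written. A diametral set that meets $D_i$ need not contain the particular cut point you place inside $I_{D_i}$: its interval can overlap $I_{D_i}$ only near one end, or straddle the gap between two chosen cuts. So one cut per member of a maximal disjoint family need not stab every interval. Moreover, adding an extra cut ``on each side'' would give $k+2$ lines, not $k+1$.

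The missing idea is the standard witness argument for greedy stabbing.
\begin{itemize}
\item Take $\varepsilon$ smaller than the gap between any two distinct interval endpoints. Let $J_1,\dots,J_m$ be the intervals that trigger a new line, the $t$-th line being $x=b(J_t)-\varepsilon$, where $b(\cdot)$ denotes the right endpoint.
\item For $s<t$, $J_t$ was still uncovered when $J_s$ was chosen, so $b(J_t)\ge b(J_s)$. Yet it was not crossed by the $s$-th line, so its left endpoint exceeds $b(J_s)-\varepsilon$, and by the choice of $\varepsilon$ it is at least $b(J_s)$.
\item Hence the witnesses have pairwise disjoint projections, except possibly for a shared endpoint; this is a degenerate situation the paper's argument also passes over. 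By your Step~2 read contrapositively, their diametral sets are pairwise disjoint.
\item Thus $m\le k$. This gives at most $k$ lines (certainly at most $k+1$) and at most $k+2$ pieces, with no ``boundary correction''. This is what the paper's nested family $\mathcal{D}_0(\Nl)\supset\mathcal{D}_1(\Nl)\supset\cdots$ is meant to encode.
\end{itemize}

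On Step~1: you neither need nor get that every shortest $pq$-path is cut. As in the paper, it suffices that one shortest $pq$-path properly crosses a cut line. Its excursion beyond the line is then replaced by a strictly shorter chord of $s_{\ell^\perp}$, unless $p$ and $q$ land in different pieces.

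On Step~4: the same chord replacement shows that distances between points of $\Nl$ never increase. A point on an added segment cannot attain $\diam(\Nl)$: the proof of Lemma~\ref{cor:contains} yields a strictly farther point once that cut is undone, which contradicts Corollary~\ref{th:convexdiameter}.
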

\begin{proof}
Let $\mathcal{D}(\Nl)$ be the set of all diametral sets $D_{\Nl}(p,q)$ of $\Nl$, say that there are $k$ distinct diametral pairs $p,q$ in $\Nl$. By Lemma \ref{lem:diamsets}, we have $k=O(n^2)$. Without loss of generality, we assume that no two vertices of $\Nl$ have the same $y$-coordinate. 

By Corollary \ref{th:convexdiameter}, to reduce the original diameter when cutting $\Nl$ by lines, it suffices to intersect all diametral sets in $\mathcal{D}(\Nl)$ with lines perpendicular to $\ell$, as the new points on the cutting lines cannot cause an increase of $\diam(\Nl)$.  Moreover,  for each diametral set $D_{\Nl}(p,q)$, it suffices to intersect a single diametral path between $p$ and $q$, since this decreases the distance between $p$ and $q$, and hence they are no longer diametral.

We claim that all sets can be intersected using $k+1$ lines. Indeed, project each diametral set $D_{\Nl}(p,q)$ onto the line $\ell$ so that each set determines an interval on the line. Then, sort the intervals by their rightmost extreme. We denote them by $[p_i,q_i]$, $1\leq i\leq k$, where $q_j<q_t$ for $j<t$. Set $\varepsilon$ smaller than half the distance between any two consecutive points in the set $\{p_i,q_i \, | \, 1\leq i\leq k\}$. The line ${\ell_1^\perp}: x=q_1-\varepsilon$ crosses the first interval $[p_1,q_1]$ and all the intervals that intersect with that. The intervals that are not crossed by ${\ell_1^\perp}$ are disjoint with $[p_1,q_1]$, which means that the diametral sets that gave rise to the corresponding intervals cannot intersect.

We now discard from $\mathcal{D}(\Nl)$ all the elements crossed by ${\ell_1^\perp}$ to obtain a subset $\mathcal{D}_1(\Nl)$ of $\mathcal{D}(\Nl)$, and repeat the same process to obtain a new line ${\ell_2^\perp}$ and a new subset $\mathcal{D}_2(\Nl)$ of $\mathcal{D}_1(\Nl)$. This produces a sequence of subsets of diametral sets $\mathcal{D}_0(\Nl)=\mathcal{D}(\Nl)\supset \mathcal{D}_1(\Nl) \supset \mathcal{D}_2(\Nl) \ldots$ satisfying that the diametral sets in $\mathcal{D}_{i-2}(\Nl)\backslash \mathcal{D}_{i-1}(\Nl)$ do not intersect those in $\mathcal{D}_{i-1}(\Nl)\backslash \mathcal{D}_{i}(\Nl)$. Hence, we can find at most $k+1$ of the lines defined above, otherwise we would have $k+1$ disjoint diametral sets. These $k+1$ lines split $\Nl$ in, at most, $k+2$ parts, each with diameter smaller than $\Nl$, since they intersect all diametral sets of $\Nl$. Therefore, $b(\Nl) \leq k+2$.
\end{proof}

We note that the previous bound can be attained, at least for $k=1$.

 \begin{observation}
Let $\mathcal{W}_{33}$ be the wheel graph on 33 vertices embedded in the plane such that its outer boundary is a regular 32-sided polygon, and the distance from the wheel center to each polygon vertex is one (see Figure~\ref{fig:wheel}.) Then, $b(\mathcal{W}_{33}) = 3$.\end{observation}

\begin{proof}
Each side of the outer boundary has length $\omega=2 \sin (\pi/32) \approx 0.19$. Any two vertices of the polygon are connected by a path of length two, through the wheel center.
This path is shorter than going along the boundary as soon as the other vertex is more than ten vertices away along the boundary (since $11\omega>2$).
It follows that the diametral pairs of this continuous graph are given by pairs of midpoints of polygon sides that are at distance $2 + \omega \approx 2.19$.
In fact, each midpoint has nine points at exactly that distance, corresponding to the midpoint exactly opposite, plus those of the first four sides neighboring the opposite side, in each direction. Figure~\ref{fig:wheel} shows the nine diametral pairs involving one of the midpoints.

\begin{figure}
  \begin{center}
    \includegraphics{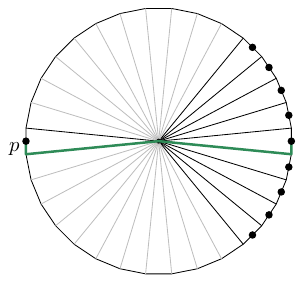}
  \end{center}
  \caption{The wheel graph on 33 vertices attains the bound of Theorem \ref{thm:k-disjoint} for $k=1$; the black thick points indicate the nine diametral pairs involving a point $p$.}
  \label{fig:wheel}
\end{figure}

Next we argue that subdividing by one line is not enough to decrease the diameter of $\mathcal{W}_{33}$.
Any line intersecting the wheel will leave at least 15 complete triangles of the wheel on one side.
These triangles are contiguous, and form a fan. 
The diameter of any such a fan with 13 or more triangles remains the same as the original one, $2+\omega$. 

It follows that two lines are necessary. Moreover, they are also sufficient, since two parallel lines at a very small distance that enclose the center will result in three subgraphs with smaller diameter.
Therefore, $b(\mathcal{W}_{33})=3=k+2$.
\end{proof}

We conclude this section by delving deeper into the two key tools we have used: the functions $h^+(x)$ and $ h^-(x)$, and the number of disjoint diametral sets that a continuous geometric graph $\Nl$ can have. We think that this additional information, which is of independent interest, can contribute to progress on the question of how large $b(\Nl)$ can be.

\subsection{Convex monotone graphs}

An $\ell$-monotone continuous geometric graph $\Nl$ is said to be \emph{convex with respect to the line $\ell$} or \emph{$\ell$-convex} if all its interior faces are convex with respect to $\ell$ (i.e., every line perpendicular to $\ell$ that intersects the face does it either at a point or at a segment).

\begin{proposition}
\label{lem:seis}
The associated functions $h^+(x)$ and $h^-(x)$ of an $\ell$-convex graph $\Nl$  are continuous.
\end{proposition}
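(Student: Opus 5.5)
We argue for $h^-(x)$; the case of $h^+(x)$ is symmetric. Since $h^-$ is monotone increasing by Lemma~\ref{cor:contains}, continuity reduces to ruling out jump discontinuities. It therefore suffices to show that for $x_1<x_2$ close to each other, $h^-(x_2)-h^-(x_1)$ is bounded by a quantity that tends to $0$ as $x_2-x_1\to 0$. The natural target is a Lipschitz-type bound $h^-(x_2)-h^-(x_1)\le C\,(x_2-x_1)$ on each interval $[x_1,x_2]$ that contains no vertex of $\Nl$ in its interior, with $C$ depending only on the slopes of the edges crossing the strip. Continuity at a vertex abscissa would then follow by letting the strip shrink from either side.

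First I fix such a strip $x_1<x<x_2$ free of vertices. Inside the strip the graph $\Nl$ consists of finitely many pairwise non-crossing segments $e_1,\dots,e_m$, ordered from bottom to top. By $\ell$-convexity, every interior face that meets the strip meets each vertical line in a single segment, and consecutive segments $e_j,e_{j+1}$ either bound such a face or the outer face. The cut segment $s_{\ell^\perp(x)}$ joins the bottommost and topmost points of $\Nl\cap\ell^\perp(x)$. Between $x_1$ and $x_2$, the graph $\Nl^-_{\ell^\perp(x_2)}$ is obtained from $\Nl^-_{\ell^\perp(x_1)}$ by replacing the vertical segment at $x_1$ with the trapezoidal pieces of the $e_j$ up to $x_2$, capped by the vertical segment at $x_2$. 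Each edge piece has length at most $(x_2-x_1)/\cos\alpha_j$, where $\alpha_j$ is the angle of $e_j$ with $\ell$.

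Next I build a map $\pi$ from $\Nl^-_{\ell^\perp(x_2)}$ to $\Nl^-_{\ell^\perp(x_1)}$. It is the identity left of $x_1$; it sends each point of an edge piece $e_j\cap[x_1,x_2]$ to $e_j\cap\ell^\perp(x_1)$; and it sends a point of $s_{\ell^\perp(x_2)}$ at height $y$ to the point of $s_{\ell^\perp(x_1)}$ at the corresponding interpolated height between consecutive edges. I then want $d_{x_2}(p,q)\le d_{x_1}(\pi p,\pi q)+\delta$ with $\delta=O(x_2-x_1)$. To show this, take a shortest $\pi p\,\pi q$-path in $\Nl^-_{\ell^\perp(x_1)}$. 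Every portion of it that runs along $s_{\ell^\perp(x_1)}$ between two edges $e_j$ and $e_{j'}$ I replace by the excursion along $e_j$ to $x_2$, then along $s_{\ell^\perp(x_2)}$, then back along $e_{j'}$. The vertical part changes length by at most the change in the gap between the edges, which is $O(x_2-x_1)$ because slopes are bounded. I must also add the short pieces from $p$ to $\pi p$ and from $q$ to $\pi q$. A shortest path uses the cut segment at most a bounded number of times; I will argue it uses it at most once, since it is a single segment. Hence $h^-(x_2)\le h^-(x_1)+O(x_2-x_1)$. Combined with Lemma~\ref{cor:contains}, this gives continuity inside the strip and one-sided continuity at its endpoints.

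The main obstacle is the strip endpoints that are vertex abscissae, where edges appear or disappear. This is exactly where convexity is needed: for a non-convex face, a vertical cut just past a reflex vertex can suddenly create a short vertical shortcut, or remove one, between far-apart parts of the face. That produces a jump, analogous to the example of Figure~\ref{fig:diameter_shortcut_worse}. For an $\ell$-convex graph, as $x\to x_v$ the set $\Nl\cap\ell^\perp(x)$ converges in the Hausdorff sense to $\Nl\cap\ell^\perp(x_v)$, and the endpoints of $s_{\ell^\perp(x)}$ converge to those of $s_{\ell^\perp(x_v)}$, because the outer boundary is monotone. The comparison map above then extends, with edges merging at $v$ mapped to $v$. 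I expect the delicate part to be checking that no vertical shortcut at $x_v$ connects two points whose distance at $x_v\pm\varepsilon$ is large. This is where I would use that every face is vertically convex, so any two points on the same vertical line inside a face are joined by that face's vertical chord in both configurations.
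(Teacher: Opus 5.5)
Your plan is sound and is essentially the paper's proof: continuity inside each vertex-free strip, because the cut points slide linearly along fixed edges, combined with the monotonicity of Lemma~\ref{cor:contains}, with convexity used only to pass through vertex abscissae; your comparison map $\pi$ with path rerouting (plus the observation that a shortest path meets the cut segment in a single interval) is a cleaner, uniformly Lipschitz version of the paper's two-case argument. At a vertex abscissa $x_v$, the Hausdorff convergence you assert is equivalent to the paper's closing remark that every vertex of an $\ell$-convex graph has incident edges in both adjacent strips. That remark follows in one line: if all edges at $v$ went left and $v$ were not the rightmost point, the face containing the rightward direction at $v$ would meet a vertical line just left of $v$ in at least two pieces. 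Once you record it, your rerouting through $v$ works verbatim on $[x_v,x_v+\varepsilon]$, so the separate ``vertical shortcut'' check you anticipate is unnecessary; this fact is needed only for the right-continuity of $h^-$ (and the left-continuity of $h^+$) at $x_v$.
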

\begin{proof}
Consider the $n-1$ strips determined by lines perpendicular  to $\ell$ through each vertex of $\Nl$; we denote them by $\ell^\perp(x_i)$ with $1\leq i\leq n=|V(\Nl)|$, and $x_i<x_j$ for $i<j$. Refer to Figure~\ref{fig:continuity}. (Again, without loss of generality, we assume that no two vertices of $\Nl$ have the same $y$-coordinate.) 

We claim that, for every $1\leq i\leq n$, the function $h^-(x)$ is continuous for $x\in [x_i, x_{i+1}]$ (informally, when the perpendicular line to $\ell$ moves inside any of the strips), and this is true even without the convexity hypothesis.

Indeed, consider an interval $[x_i,x_{i+1}]$ (which determines a strip), a line $\ell^\perp(x_j)$ with $x_j\in [x_i,x_{i+1}]$, and the line $\ell^\perp(x_j-\delta)$ for a sufficiently small value $\delta \in (0, x_j-x_i)$; see Figure~\ref{fig:continuity}a.
 Let $p,q$ be diametral points of $\Nl^-_{\ell^\perp(x_j)}=(\ell^\perp(x_j) \, ^-\cap \Nl)\cup s_{\ell^\perp(x_j)}$. Suppose first that neither $p$ nor $q$ are on $s_{\ell^\perp(x_j)}$, and choose $\delta$ so that they are on the left of $\ell^\perp(x_j-\delta)$. We distinguish two cases.

\,

\noindent \emph{Case 1. No diametral $pq$-path contains part of the segment $s_{\ell^\perp(x_j)}$}. Then, no diametral $pq$-path intersects $\ell^\perp(x_j-\delta)$, as inside a strip there are no vertices to cross the line and go back. Thus, line $\ell^\perp(x_j-\delta)$ does not shorten any diametral $pq$-path. Hence, $$h^-(x_j)=d_{\Nl^-_{\ell^\perp(x_j)}}(p,q)=d_{\Nl^-_{\ell^\perp(x_j-\delta)}}(p,q)\leq h^-(x_j-\delta).$$ Further,
by Lemma \ref{cor:contains}, $ h^-(x_j-\delta)\leq h^-(x_j)$, which implies that $ h^-(x_j)=h^-(x_j-\delta)$.

\

\noindent \emph{Case 2. There is a diametral $pq$-path that contains part of segment $s_{\ell^\perp(x_j)}$.}
The situation in this case is illustrated in Figure~\ref{fig:continuity}b. Let $r_j$ and $t_j$ be the endpoints of the sub-segment of $s_{\ell^\perp(x_j)}$ contained in the diametral path, and let $r_{j-\delta}$ and $t_{j-\delta}$ be the intersection points of the path with the line $\ell^\perp(x_j-\delta)$. As there are no vertices in the strip, points $r_j$ and $r_{j-\delta}$ must belong to the same edge of $\Nl$, and the same happens with $t_j$ and $t_{j-\delta}$. This means that, in the strip determined by $[x_i,x_{i+1}]$, line $\ell^\perp(x)$ moves linearly along two edges, which is a continuous movement. Hence, $d_{\Nl^-_{\ell^\perp(x_j-\delta)}}(p,q)=d_{\Nl^-_{\ell^\perp(x_j)}}(p,q)-\varepsilon$ for $\varepsilon>0$. By Lemma \ref{cor:contains},  $$d_{\Nl^-_{\ell^\perp(x_j)}}(p,q)-\varepsilon=d_{\Nl^-_{\ell^\perp(x_j-\delta)}}(p,q)\leq h^-(x_j-\delta)\leq h^-(x_j)=d_{\Nl^-_{\ell^\perp(x_j)}}(p,q).$$
When $\varepsilon$ tends to zero, $h^-(x_j)-h^-(x_j-\delta)$ also tends to zero.

\
 
The argument is similar when either $p$ or $q$ are on $s_{\ell^\perp(x_j)}$, say $q$. If point $q$ is on an edge of $\Nl$, we take the point $r_{j-\delta}$ (again, the intersection point of the $pq$-path with the line $\ell^\perp(x_j-\delta)$) and argue as in case 2 above, where instead of two edges given by points $r_j,r_{j-\delta},t_j,t_{j-\delta}$, there is only one edge given by $r_{j-\delta}$ and $q$. Now, if $q$ is not a point in $\Nl$, we consider the region bounded by the lines $\ell^\perp(x_j),\ell^\perp(x_j-\delta)$ and the two edges enclosing $q$. Project $q$ (in the direction of $\ell$) on the boundary of that region obtaining the point $q'$, see Figure~\ref{fig:continuity}c. We argue as in case 2, but taking $d_{\Nl^-_{\ell^\perp(x_j-\delta)}}(p,q')=d_{\Nl^-_{\ell^\perp(x_j)}}(p,q)-\varepsilon$.

The convexity hypothesis lets us conclude that our argument proves the continuity in each closed strip, and so we can extend the continuity to the left and the right ``moving'' from a strip to its (at most) two neighboring strips. Note that if $\Nl$ is convex, every vertex has incident edges in the (at most) two strips it belongs to.
\end{proof} 

\begin{figure}
\centering
	\includegraphics{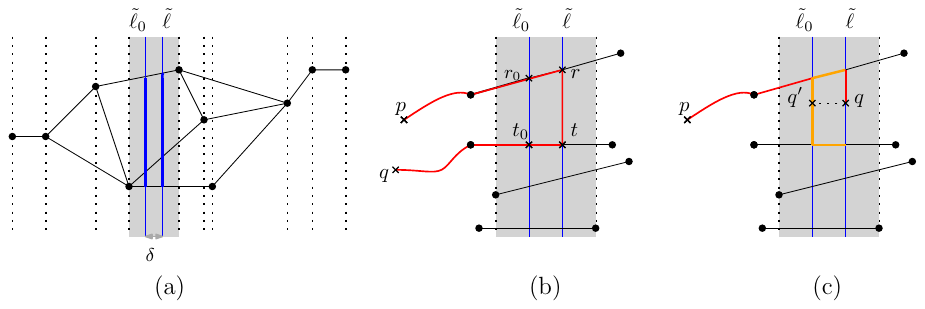}
     \caption{(a) A convex graph, which is split into continuous subgraphs, each contained in one strip;  lines $\ell^\perp(x_j)$ and $\ell^\perp(x_j-\delta)$ in blue; line $\ell^\perp(x_j-\delta)$ is indicated by the value $\delta$, and segments $s_{\ell^\perp(x_j)}$ and $s_{\ell^\perp(x_j-\delta)}$ are indicated in thicker blue. (b) Diametral points $p,q$ that are not on $s_{\ell^\perp(x_j)}$, and a diametral $pq$-path (in red) that contains part of $s_{\ell^\perp(x_j)}$. (c) Point $q$ is now on $s_{\ell^\perp(x_j)}$ but not in $\Nl$, and point $q'$ is the projection (in the direction of convexity) of $q$ onto the orange region.}
	\label{fig:continuity}
\end{figure}

\subsection{Disjoint diametral sets}

The upper bound of Theorem~\ref{thm:k-disjoint} depends on the number of disjoint diametral sets that a monotone graph  has. This raises the question of how many disjoint diametral sets or, in particular, disjoint diametral paths a continuous geometric graph $\Nl$ (not necessarily monotone) might admit, taking into account the planarity constraint. This problem is far from being solved; our aim in this section is to provide a first insight into the structure of the graphs that contain disjoint diametral sets.
As an example, Figure~\ref{fig:3_disjoint_diameters} illustrates a graph with three disjoint diametral paths. Notice that, in this graph, there are also diametral paths that are not disjoint (each pair of square points is a diametral pair.)

\begin{figure}[htbp]
\begin{center}
\includegraphics{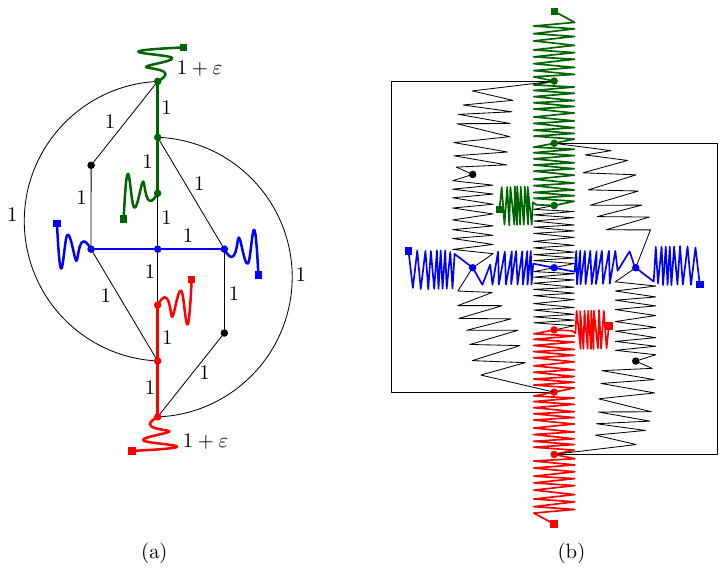}
\end{center}
\caption{(a) Sketch of a  graph with three disjoint diametral paths (green, red, and blue); winding edges have length $1+\varepsilon$, all others have length $1$, resulting in a diameter of $4+2\varepsilon$; (b) an equivalent polygonal (approximate) geometric realization of the graph.}\label{fig:3_disjoint_diameters}
\end{figure}

For two fixed pairs $p,q$ and $p',q'$ of diametral points of  $\Nl$, consider any subgraph of $\Nl$ that contains six shortest $ij$-paths with different endpoints $i,j \in\{p,q,p',q'\}$, $i\neq j$. We use $\Nl(pq,p'q')$ to denote this selected subgraph; see Figure~\ref{fig:impossible3}. 
By testing all possible combinatorial ways to have two disjoint diametral sets, one can verify that the only way in which this is possible is the four diametral points are not on the same face of \Nl.

\begin{proposition} \label{prop:disjdiam}
Let $p,q$ and $p',q'$ be two pairs of diametral points of a continuous geometric graph $\Nl$. If the diametral sets $D_{\Nl}(p,q)$ and $D_{\Nl}(p',q')$ are disjoint then, for any planar embedding of $\Nl(pq,p'q')$, the four points $\{p,q,p',q'\}$ are not located in the same face. 
\end{proposition}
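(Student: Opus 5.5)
We argue by contradiction: suppose some planar embedding of $\Nl(pq,p'q')$ has $p,q,p',q'$ on the boundary of a common face $f$. Write $D=\diam(\Nl)$, so $d_{\Nl}(p,q)=d_{\Nl}(p',q')=D$ and every other pairwise distance among the four points is at most $D$. Since $\Nl(pq,p'q')$ is a subgraph of $\Nl$ containing shortest paths of $\Nl$, distances between the four points are the same in both graphs. The plan is to show that planarity, together with the common face, forces a shortest $pq$-path and a shortest $p'q'$-path to meet, contradicting disjointness of the diametral sets.

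First we use the face to fix a cyclic order. Place a new vertex $z$ inside $f$ and join it to the four points; the graph stays planar, and this determines a cyclic order of $p,q,p',q'$ around $f$. There are two cases. In the \emph{alternating} case, $p$ and $q$ separate $p'$ from $q'$ on the boundary of $f$. In the \emph{non-alternating} case, the order is, say, $p,q,q',p'$ or $p,q,p',q'$ up to relabeling.

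In the alternating case, let $P$ be a shortest $pq$-path. The closed curve $P\cup zp\cup zq$ is a Jordan curve that separates $p'$ from $q'$, since the edges $zp'$ and $zq'$ leave $z$ on opposite sides of it. A shortest $p'q'$-path cannot pass through $z$, because $z$ is not in $\Nl$. Hence it must cross $P$, and as the embedding is plane it shares a point with $P$. This contradicts $D_{\Nl}(p,q)\cap D_{\Nl}(p',q')=\emptyset$.

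In the non-alternating case, assume the order $p,p',q',q$. Planarity is now used the same way but on the cross paths. Add $z$ as before and consider a shortest $pq'$-path $A$ and a shortest $p'q$-path $B$. These are the two "diagonals" of the cyclic quadrilateral, so the Jordan curve argument above shows that $A$ and $B$ meet at some point $x$. Then
\[
d(p,q')+d(p',q)=d(p,x)+d(x,q')+d(p',x)+d(x,q)\ge d(p,q)+d(p',q')=2D.
\]
Since each term on the left is at most $D$, we get $d(p,q')=d(p',q)=D$ with equality throughout. It follows that $d(p,x)+d(x,q)=D$, so $x$ lies on a shortest $pq$-path, and likewise $d(p',x)+d(x,q')=D$, so $x$ lies on a shortest $p'q'$-path. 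Thus $x\in D_{\Nl}(p,q)\cap D_{\Nl}(p',q')$, again a contradiction. The other non-alternating order is handled symmetrically, using the diagonals $pp'$ and $qq'$ instead.

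The main obstacle is making the topological step rigorous when the points are not in general position. The four points may coincide with vertices or lie in the interiors of edges. The paths $A$ and $B$ may touch without crossing, or share subpaths. The point $x$ might itself equal one of $p,q,p',q'$. We handle these by noting that any common point suffices for the metric argument, and by subdividing edges so that the four points become vertices. The Jordan curve separation then holds because $z$ lies in the open face $f$.
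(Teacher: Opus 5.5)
Your proof is correct, and it takes a genuinely different route from the paper.

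\textbf{The paper's route.} The proof there is computer-assisted. It fixes two disjoint shortest paths for $pq$ and $p'q'$ and models the four cross paths ($pp'$, $pq'$, $qp'$, $qq'$) by discretized attachment points, four on each diametral path. For each of the $4^8$ combinatorial configurations, a linear program checks whether edge lengths exist that make both pairs diametral while every alternative $pq$- or $p'q'$-route through the cross connections is strictly longer. It then tests planarity of the graph augmented by a vertex adjacent to $p,q,p',q'$. All feasible configurations turn out to be non-planar.

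\textbf{Your route.} You replace the enumeration with two conceptual ingredients. First, the rotation at an auxiliary vertex $z$ in the common face, combined with the Jordan curve theorem, forces the two ``diagonal'' shortest paths to meet. Second, splitting both diagonals at a common point $x$ gives the exchange inequality $d(p,q')+d(p',q)=\bigl(d(p,x)+d(x,q)\bigr)+\bigl(d(p',x)+d(x,q')\bigr)\ge 2D$. Together with the diameter bound, this pushes $x$ into both diametral sets.

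\textbf{What each approach buys.} Yours is short, human-checkable and fully general. It is not tied to the completeness of the paper's discretization, for example that four attachment points per path suffice, or that cross connections are single edges sharing no points with each other or with both diametral paths at once. It handles arbitrary overlaps among the six paths and points in edge interiors uniformly. It also explains \emph{why} the obstruction exists: in any cyclic order on a face, some pair of diagonals must meet, and any meeting point is forced onto both diametral sets. The paper's enumeration gives instead a catalogue of which combinatorial configurations of cross paths are metrically realizable. That may be useful for the structural questions raised in that subsection, but it is not needed for the proposition.

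\textbf{One point to write out.} In the non-alternating case, the separation of $p'$ from $q$ by $A\cup zp\cup zq'$ requires $p',q\notin A$. If either lies on $A$, then $A$ and $B$ meet trivially, so $A\cap B\neq\emptyset$ holds in every case. In the alternating case, $p',q'\notin P$ follows directly from the disjointness of the diametral sets.
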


\begin{proof}[Proof sketch]
    Our proof is computational.
    We consider two diametral paths: the one between $p$ and $q$, and the one between $p'$ and $q'$. 
    They must be disjoint, but there need to exist four \emph{crossing paths} between: $p$ and $p'$, $p$ and $q$, $q$ and $p'$, and $q$ and $q'$. 
    Each of these paths can use part of the two diametral paths. 
    We model this by subdividing each diametral path into five sections, to account for all the possible ways in which the four crossing paths can share parts of the two shortest paths.
    See Figure~\ref{fig:program}.
    Consider, for example, the shortest path between $p$ and $p'$: this path will start following the shortest path from $p$ to $q$ until certain point (possibly equal to $p$)---in the figure, the point labeled 0, then connect to the shortest path from $p'$ and $q'$ at some other point---in the figure, point 4, and then will follow it until reaching $p'$.
    In our program, we discretize the points at which the crossing paths leave/join the $p$-$q$ and $p'$-$q'$ shortest paths.
    To account for all possible situation, it is enough to have four points on each shortest path.
    In the figure, these are labeled $0$--$3$ and $4$--$7$.
    This means that each of the four crossing paths has $4^2$ combinatorial options, leading to $(4^2)^4=65536$ total cases.

    We wrote a program in Sagemath that tries each of these cases.
    For each case, we determine if there is an assignment of path lengths that results in the desired configuration: one where $p,q$ and $p',q'$ are diametral pairs.
    We determine this by formulating the problem as a linear program, where the variables correspond mainly to the lengths of the five subpaths on each diametral path (e.g., variable $x_{01}$ in the figure), and four variables for the parts of the crossing paths connecting between the two shortest paths (e.g., variable $x_{pp'}$ in the figure).

    In case a combinatorial case is feasible, we check if vertices $p,q,p',q'$ can be placed on the same face. 
    To do this, we add a new vertex $v_\infty$ to the graph, with edges to $p,q,p',q'$, and check if the resulting graph is planar.
    In all cases, the resulting graph turned out to be non-planar, thus implying that the only possible way in which the two diametral paths can be disjoint is by not having all four of $p,q,p',q'$ on the same face.
    
    The complete Sagemath code is provided in the appendix.
\end{proof}

\begin{figure}[tb]
    \centering
    \includegraphics{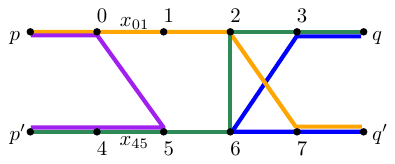}
    \caption{Example with concrete entry points for each of the four crossing paths on the two shortest paths.}
    \label{fig:program}
\end{figure}

Figure \ref {fig:impossible3} shows that the converse of Proposition \ref{prop:disjdiam} is not true in general. Indeed, in this example, the diametral sets $D_{\Nl}(p,q)$ and $D_{\Nl}(p',q')$ consist of  single shortest paths (connecting the corresponding diametral points); the paths intersect at one point, denoted by $z$. On the other hand, one can prove that there cannot be a planar embedding of $\Nl(pq,p'q')$ with the four points $\{p,q,p',q'\}$  located in the same face. This comes from the fact that this graph is not outerplanar (it contains a bipartite graph $K_{2,3}$ as an induced subgraph.)

\begin{figure}[htbp]
    \centering
    \includegraphics[width=5cm]{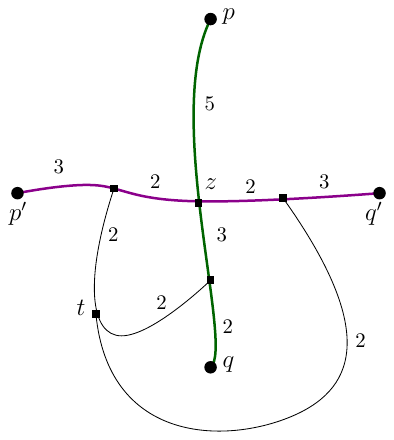}
    \caption{Sketch of $\Nl(pq,p'q')$. The shortest $pq$-path and $p'q'$-path intersect at point $z$. The square points induce a bipartite graph $K_{2,3}$, where  $\{t,z\}$ is the class of size $2$.}
    \label{fig:impossible3}
\end{figure}


\section{Borsuk number of trees}\label{sec:trees}

In Observation~\ref{obs:examples} we exhibit examples of  trees where the Borsuk number, in the discrete case, can be easily computed, showing in particular that this parameter can be linear in the number of vertices. In this section, we first compute the Borsuk number of an arbitrary tree $T$, and then we move to the continuous version of the problem, which behaves differently.

\begin{proposition}
The Borsuk number of any tree $T$ with $n$ vertices can be computed in $O(n)$ time. Furthermore, 
\begin{enumerate}
\item[(i)] If the center of $T$ is not a unique vertex, then $b(T)=2$.
\item[(ii)] If the center of $T$ is a unique vertex $v$, then $b(T)=b(T')=\delta_{T'}(v)$, where $T'$ is the subtree of $T$ induced by the vertices of all diametral paths, and $\delta_{T'}(v)$ is the degree of $v$ in $T'$.
\end{enumerate}
\end{proposition}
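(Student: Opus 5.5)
We use the standard center structure of trees. Let $D=\diam(T)$. If $D$ is odd the center is an edge $c_1c_2$: every diametral path passes through this edge, and every vertex is at distance at most $(D-1)/2$ from its nearer endpoint. If $D$ is even the center is a single vertex $v$, every diametral path passes through $v$, and every vertex is within distance $D/2$ of $v$. Both facts follow from the classical argument that all longest paths share the middle of any one of them. All objects below can be found in $O(n)$ time by two BFS sweeps: the first gives $D$ and a diametral path, hence the center; a BFS from the center gives all eccentricities.

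Case (i), bicentral. Delete the edge $c_1c_2$. Each component $T_i$ ($i=1,2$) consists of vertices within distance $(D-1)/2$ of $c_i$. Since $T_i$ is a tree containing $c_i$, any two of its vertices are at distance at most $D-1<D$ in $T_i$. So $b(T)\le 2$. One component always has diameter at least $D$, so $b(T)\ge 2$, and hence $b(T)=2$.

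Case (ii), unique central vertex $v$. Call a neighbour $w$ of $v$ \emph{long} if the branch at $w$ contains a vertex at distance $D/2$ from $v$. These long neighbours are exactly the neighbours of $v$ in $T'$, so $\delta_{T'}(v)=m\ge 2$.

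Upper bound: delete $m-1$ of the edges $vw$ with $w$ long. Each detached branch contains only vertices within distance $D/2-1$ of $w$, so its diameter is at most $D-2$. The remaining component contains $v$, one long branch and all short branches. Any path in it through $v$ uses at most one vertex at depth $D/2$; the other end has depth at most $D/2-1$, so the path has length at most $D-1$. Paths inside a single branch are even shorter. This gives $m$ parts, so $b(T)\le m$.

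Lower bound, which is the main step: take any correct partition and pick deepest vertices $x_1,\dots,x_m$ at depth $D/2$, one in each long branch. Any two of them are at distance $D$ and their unique path passes through $v$. If some part contained two of them, that part would be a connected subtree containing their unique path, so its diameter would be $D$. Hence the $x_i$ lie in pairwise distinct parts, and $b(T)\ge m$.

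Finally, the same argument applied to $T'$ gives $b(T')=\delta_{T'}(v)$, since $T'$ has the same center and the same long neighbours. The delicate points are to state precisely that parts are connected subtrees, so that distances within a part equal distances in $T$ and a part containing $x_i,x_j$ has diameter $D$, and to check in the upper bound that the retained branch together with the short branches really has diameter below $D$.
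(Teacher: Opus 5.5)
Your proof is correct and follows the same route as the paper: in the bicentral case you delete the central edge, and otherwise you delete all but one of the edges joining $v$ to its $T'$-neighbours, with the center found in linear time. You also supply details the paper only asserts, in particular the lower bound $b(T)\ge \delta_{T'}(v)$, obtained by placing one depth-$D/2$ vertex in each long branch and observing that these vertices must lie in pairwise distinct parts. This is more precise than the paper's claim that those particular edges must be deleted.
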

\begin{proof}
If the center of  $T$ consists of two adjacent vertices, it suffices to remove the edge incident to both vertices  to obtain two trees with smaller diameter (since all diametral paths contain that edge); this gives $b(T)=2$.
When the center is a single vertex $v$, we need to delete all edges incident to $v$ that are in a diametral path except one, that is, $\delta_{T'}(v)-1$ edges, which gives rise to $\delta_{T'}(v)$ connected components, and so $b(T)=b(T')=\delta_{T'}(v)$.

Since the center of a (weighted) tree can be computed in linear time~\cite{HCH81}, so does its Borsuk number.
\end{proof}

 While the value of $b(T)$ depends on the center of the tree $T$, next we show that the Borsuk number of  a continuous geometric tree $\T$ is upper bounded by a constant. We begin with a technical lemma, which establishes that lines intersecting a tree  at its center cannot cause an increase of the diameter of the original tree.

  \begin{lemma}\label{lem:cycles}
Let $\T$ be a continuous geometric tree with center point $\mathcal{C}_{\T}$, and let  $\ell$ be a line that passes through $\mathcal{C}_{\T}$. Then,
\begin{equation}\label{eq:non-stric}
\max\{{\rm diam}(\T_{\ell}^+), {\rm diam}(\T_{\ell}^-)\}\leq {\rm diam}(\T),\end{equation}
 \end{lemma}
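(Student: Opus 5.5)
We plan to bound the diameter of $\T_{\ell}^+$; the argument for $\T_{\ell}^-$ is symmetric. Write $D=\diam(\T)$ and recall the standard facts about continuous trees. The center point $\mathcal{C}_{\T}$ is the midpoint of every diametral path. Moreover, every point $x\in\T$ satisfies $d_{\T}(x,\mathcal{C}_{\T})\le D/2$. Indeed, if some $x$ had $d_{\T}(x,\mathcal{C}_{\T})>D/2$, then for a diametral pair $a,b$, one of $a,b$ lies in a branch at $\mathcal{C}_{\T}$ different from the one containing $x$. The distance from $x$ to that point would then exceed $D$, which is impossible.

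The key step is to show that every point of $\T_{\ell}^+$ lies within distance $D/2$ of $\mathcal{C}_{\T}$, measured in $\T_{\ell}^+$. Since $\ell$ passes through $\mathcal{C}_{\T}$, the center lies on $s_\ell$, so $\mathcal{C}_{\T}\in \T_{\ell}^+$. Now take a point $x\in \ell^+\cap\T$. Its unique path $P$ to $\mathcal{C}_{\T}$ in $\T$ has length at most $D/2$.

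\begin{itemize}
\item If $P$ stays in the closed half-plane $\ell^+\cup\ell$, then $P$ is contained in $\T_{\ell}^+$, and the bound holds.
\item Otherwise, let $y$ be the first point where $P$ meets $\ell$. Then $y\in s_\ell$, since $y\in \T\cap\ell$ and $s_\ell$ is the longest segment on $\ell$ with endpoints in $\T$. In $\T_{\ell}^+$ we replace the rest of $P$ by the straight segment $y\mathcal{C}_{\T}\subset s_\ell$. Its length is the Euclidean distance $|y\mathcal{C}_{\T}|$, which is at most the length of the remaining part of $P$.
\end{itemize}

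In both cases $d_{\T_{\ell}^+}(x,\mathcal{C}_{\T})\le d_{\T}(x,\mathcal{C}_{\T})\le D/2$.

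It remains to treat the points of $s_\ell$ that are not in $\T$. Such a point $z$ lies on the segment $s_\ell$ between $\mathcal{C}_{\T}$ and an endpoint $e$ of $s_\ell$, and $e\in\T$. Hence $d_{\T_{\ell}^+}(z,\mathcal{C}_{\T})\le |e\mathcal{C}_{\T}|\le d_{\T}(e,\mathcal{C}_{\T})\le D/2$, because Euclidean distance never exceeds distance along the tree.

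With the radius bound in hand, the triangle inequality in $\T_{\ell}^+$ gives $d_{\T_{\ell}^+}(x,x')\le D/2+D/2=D$ for any two points $x,x'$, which proves \eqref{eq:non-stric}.

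We expect the main obstacle to be the claim that the whole tree has radius $D/2$ around $\mathcal{C}_{\T}$ in the continuous geometric setting. This needs the center to be defined as the midpoint of the diametral paths. The branch argument then relies on $\T$ being a tree, so that distances from $\mathcal{C}_{\T}$ add along branches. A further technical point is that the first hitting point $y$ of $\ell$ genuinely belongs to $s_\ell$. This holds by maximality of $s_\ell$, because all intersection points of $\ell$ with $\T$ lie on $s_\ell$.
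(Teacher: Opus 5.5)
Your proof is correct and follows essentially the same route as the paper. You show that every point of $\mathcal{T}_{\ell}^+$ lies within distance $\mathrm{diam}(\mathcal{T})/2$ of $\mathcal{C}_{\mathcal{T}}$, handling off-tree points of $s_\ell$ by a Euclidean comparison with a tree point further out on $s_\ell$, and then conclude with the triangle inequality through $\mathcal{C}_{\mathcal{T}}$. Your version is more careful than the paper's, which calls the tree-point case ``clear'' without noting that the tree path to $\mathcal{C}_{\mathcal{T}}$ may cross into $\ell^-$ and must be shortcut along $s_\ell$, as you do. The points of $s_\ell\cap\mathcal{T}$, which you do not list separately, are covered verbatim by your argument for $s_\ell\setminus\mathcal{T}$.
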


 \begin{proof}
We prove the lemma for $\T_{\ell}^+$; the arguments for $\T_{\ell}^-$ are equivalent.
Recall that $\T_{\ell}^+=(\ell^+\cap \T)\cup s_{\ell}$.
Since, by definition, the center point $\mathcal{C}_{\T}$ is the midpoint of all diametral paths of $\T$, it suffices to show that $d_{\T_{\ell}^+}(p,\mathcal{C}_{\T})\leq \diam(\T)/2$ for every point $p\in \T_{\ell}^+$
(the distance between two diametral points would then be at most $\diam(\T)$).
This upper bound for $d_{\T_{\ell}^+}(p,\mathcal{C}_{\T})$ is clear when $p\in \T$, since $\mathcal{C}_{\T}$ is the center of $\T$. Now, suppose that $p\in s_{\ell} \setminus\{\T\}$. Starting at $\mathcal{C}_{\T}$, walk through segment $s_{\ell}$ (in the direction to $p$), and consider the first point $q\in s_{\ell}\cap \T$ that is located after $p$. Since, in $\T_{\ell}^+$, the distances of $p$ and $q$ to $\mathcal{C}_{\T}$ are given by the lengths of the straight-line segments $\overline{p\mathcal{C}_{\T}}$ and $\overline{q\mathcal{C}_{\T}}$, we have
$$d_{\T_{\ell}^+}(p,\mathcal{C}_{\T})<d_{\T_{\ell}^+}(q,\mathcal{C}_{\T})<d_{\T}(q,\mathcal{C}_{\T})\leq \frac{\diam(\T)}{2}.$$
 \end{proof}

\begin{remark}\label{obs:non-center}
Lemma~\ref{lem:cycles} also holds for lines that intersect the tree, not at the center, but infinitely close to it. The difference in the proof would be that $d_{\T_{\ell}^+}(p,\mathcal{C}_{\T})$ and $d_{\T_{\ell}^+}(q,\mathcal{C}_{\T})$ are not given exactly by the lengths of the segments $\overline{p\mathcal{C}_{\T}}$ and $\overline{q\mathcal{C}_{\T}}$, but we must add a term that tends to zero (the distance from the closest point in $\mathcal{C}_{\T}\cap \ell$ to $\mathcal{C}_{\T}$.)
\end{remark}

\begin{remark}\label{rmk:half-diameter}
 In the proof of Lemma~\ref{lem:cycles}, we show that if  $\T$ is cut by a line that goes through its center $\mathcal{C}_{\T}$, the distance from  $\mathcal{C}_{\T}$ to any point on the resulting graphs is at most $\diam(\T)/2$, and strictly smaller when the point is on the line but not on the tree. Using Remark~\ref{obs:non-center}, this can be extended to lines that intersect the tree infinitely close to the center.
\end{remark}

\begin{proposition}
\label{prop:tree}
If $\T$ is continuous geometric tree, then $b(\T)\leq 3$.
\end{proposition}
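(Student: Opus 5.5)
The plan is to cut $\T$ with two lines that pass infinitely close to the center point $\mathcal{C}_{\T}$, namely two parallel lines $\ell_1,\ell_2$ at a tiny distance $\varepsilon$ apart with $\mathcal{C}_{\T}$ strictly between them. This mimics the wheel example, where two close parallel lines enclosing the center suffice. The three resulting pieces are $\T_1$ (the part on the far side of $\ell_1$ together with $s_{\ell_1}$), $\T_3$ (the analogous piece for $\ell_2$), and the thin middle strip $\T_2$ containing $\mathcal{C}_{\T}$. We choose the direction of the lines so that they are not parallel to the edge containing $\mathcal{C}_{\T}$ (or to any edge at $\mathcal{C}_{\T}$ if it is a vertex), and so that they pass through no vertex other than possibly near the center.

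For the two outer pieces, Lemma~\ref{lem:cycles}, together with Remark~\ref{obs:non-center}, already gives non-strict inequality. Strictness comes from Remark~\ref{rmk:half-diameter}. Every diametral path of $\T$ has $\mathcal{C}_{\T}$ as its midpoint, so every diametral path of $\T$ has one diametral endpoint on each side of the strip; hence no diametral pair of $\T$ survives inside $\T_1$. For a pair $p,q\in\T_1$ with $p,q\in\T$, their $\T_1$-distance is at most $d_\T(p,q)$ through the segment shortcut, and this is strictly less than $\diam(\T)$, since $p$ and $q$ lie on the same side and are therefore not diametral in $\T$. A compactness argument upgrades this to a uniform gap, because the maximum over the compact set $\T_1$ is attained. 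For pairs in which a point lies on $s_{\ell_1}\setminus\T$, the remark gives a distance to (near) $\mathcal{C}_{\T}$ strictly below $\diam(\T)/2$, so the sum is strictly below $\diam(\T)$. There is one subtlety: $d_{\T_1}(p,q)$ is not literally bounded by passing through $\mathcal{C}_{\T}$, since the center is not in $\T_1$. We handle this by routing through the point of $\ell_1\cap\T$ nearest the center and absorbing an $O(\varepsilon)$ error, which is then beaten by the strict gap once $\varepsilon$ is small.

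The middle piece $\T_2$ consists of short pieces of edges crossing the strip, plus the two segments $s_{\ell_1},s_{\ell_2}$. Its diameter is at most roughly $|s_{\ell_1}|+|s_{\ell_2}|+O(\varepsilon)$, and we must show this is less than $\diam(\T)$. I expect this to be the main obstacle, because the segments $s_\ell$ can be long: $\T$ may cross $\ell$ far from the center. To control them, I would pick the line direction to be essentially the direction perpendicular to the edge $e$ through $\mathcal{C}_{\T}$, and show that every intersection point of $\ell$ with $\T$ lies at Euclidean distance at most $\diam(\T)/2$ from $\mathcal{C}_{\T}$. This holds because tree distance dominates Euclidean distance, and all points of $\T$ are within $\diam(\T)/2$ of the center. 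It yields $|s_{\ell_i}|\le\diam(\T)$, which is not strict and is too weak on its own. The refinement is to use that, inside the strip, the two segments are joined through $e$ near the center, so a path in $\T_2$ between two points goes from one point to near $\mathcal{C}_{\T}$ along $s_{\ell_i}$ and then back out. Each half has length strictly less than $\diam(\T)/2$ unless the endpoint of $s_\ell$ is a point of $\T$ at Euclidean distance exactly $\diam(\T)/2$, which forces a straight radial diametral half-path lying on $\ell$. That is excluded by the choice of direction, since we can avoid the finitely many such directions. Hence $\diam(\T_2)<\diam(\T)$, and with the outer pieces this gives three pieces and $b(\T)\le 3$.
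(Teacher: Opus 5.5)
Your strategy is the paper's own. In its Case~2 (center a vertex) the paper also replaces a line through $\mathcal{C}_{\mathcal{T}}$ by two parallel lines just above and below it; when the center is interior to an edge it uses a single line. The paper does not analyze the middle piece at all, so your treatment of $\mathcal{T}_2$ adds something.

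However, your strictness argument for the outer pieces rests on a false claim. A diametral path need not have its endpoints on opposite sides of the strip. Take a star whose center vertex has three edges of equal length at angles $60^\circ$, $120^\circ$, $270^\circ$, and cut it by horizontal lines. The two upper leaves form a diametral pair of $\mathcal{T}$, and both lie in $\mathcal{T}_1$. So ``$p$ and $q$ lie on the same side and are therefore not diametral in $\mathcal{T}$'' is wrong. This is precisely the configuration that makes one line through the center insufficient.

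The conclusion still holds, for a different reason that is really the point of the construction. The tree path of a diametral pair passes through $\mathcal{C}_{\mathcal{T}}$, which lies strictly on the far side of $\ell_1$. If both endpoints are in $\mathcal{T}_1$, the path crosses $\ell_1$ at points $a,b$ immediately before and after $\mathcal{C}_{\mathcal{T}}$. Replacing the subpath through the center by the chord $ab\subset s_{\ell_1}$ gives $d_{\mathcal{T}_1}(p,q)\le d_{\mathcal{T}}(p,q)-\bigl(|a\mathcal{C}_{\mathcal{T}}|+|\mathcal{C}_{\mathcal{T}}b|-|ab|\bigr)<{\rm diam}(\mathcal{T})$. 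The inequality is strict because $\mathcal{C}_{\mathcal{T}}\notin\ell_1$.

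Two smaller repairs are needed in the $\varepsilon$-bookkeeping.

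\emph{The gap must be uniform in $\varepsilon$.} The gap from compactness is a gap for the fixed set $\mathcal{T}_1$, so it depends on $\varepsilon$ and cannot absorb an $O(\varepsilon)$ error. Likewise, the strict inequality of Remark~\ref{rmk:half-diameter} is not uniform near the endpoints of $s_{\ell_1}$. The uniform gap you need is the one you only invoke for $\mathcal{T}_2$. If the direction is not parallel to any edge at $\mathcal{C}_{\mathcal{T}}$, consider the finitely many leaves at Euclidean distance exactly ${\rm diam}(\mathcal{T})/2$ from the center. Each ends a straight segment from the center along an incident edge, so it lies at positive distance from the line through the center. Hence every point of $s_{\ell_1}\cup s_{\ell_2}$ is within ${\rm diam}(\mathcal{T})/2-\delta$ of $\mathcal{C}_{\mathcal{T}}$, with $\delta$ independent of $\varepsilon$. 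Use that bound for pairs with a point on $s_{\ell_1}$ and the shortcut above for pairs of tree points. Apply compactness only at the end, for a fixed small $\varepsilon$.

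\emph{The direction must put edges of the center on both sides.} You route through a point of $\ell_i\cap\mathcal{T}$ within $O(\varepsilon)$ of the center, both in $\mathcal{T}_1$ and to join $s_{\ell_1}$ to $s_{\ell_2}$ inside $\mathcal{T}_2$. This requires an edge at $\mathcal{C}_{\mathcal{T}}$ crossing each $\ell_i$. If the center is a vertex whose incident edges all lie on one side of the line through it, the nearest point of $\ell_1\cap\mathcal{T}$ can be far away. So choose the direction so that $\mathcal{C}_{\mathcal{T}}$ has incident edges on both sides; this is possible since its degree is at least two. ``Perpendicular to the edge through the center'' does not determine this when the center is a vertex.

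With these changes the argument is complete.
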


\begin{proof}

Let $\ell$ be a line that passes through the center point $\mathcal{C}_{\T}$ of $\T$, and divides the tree into two graphs, $\T_{\ell}^+=(\ell^+\cap \T)\cup s_{\ell}$ and $\T_{\ell}^-=(\ell^-\cap \T)\cup s_{\ell}$. Assume, without loss of generality, that  $\ell$ does not contain any edge incident or passing through $\mathcal{C}_{\T}$. 
By Lemma~\ref{lem:cycles}, the diameters of $\T_{\ell}^+$ and $\T_{\ell}^+$ satisfy~(\ref{eq:non-stric}) and, moreover, by Remark~\ref{rmk:half-diameter}, we have $d_{\T_{\ell}^+}(p,\mathcal{C}_{\T})\leq \diam(\T)/2$ for every point $p\in \T_{\ell}^+$ (the same for $\T_{\ell}^-$.) We distinguish two cases.

\

\noindent \emph{Case 1. The center $\mathcal{C}_{\T}$ is not a vertex of $\T$.}

\noindent  We prove that, in this case, $\diam(\T_{\ell}^-)<\diam(\T)$; the argument is analogous for $\T_{\ell}^+$. This implies that  $\max\{\diam(\T_{\ell}^+),\diam(\T_{\ell}^-)\}< \diam(\T)$, which gives $b(\T)=2$.
Note that the center $\mathcal{C}_{\T}$ is located in the interior of an edge $uv$ with, say, $u\in \T_{\ell}^-$ and $v\in \T_{\ell}^+$.

Let $p,q\in \T_{\ell}^-$, and suppose first that $p,q\in\T \setminus \{s_{\ell}\cap \T\}$. 

If one of the points is not a leaf, then $d_{\T_{\ell}^-}(p,q)< \diam(\T)$, since the distance in $\T$ between the points before inserting  $\ell$ was already smaller than the diameter of $\T$. 

If $p$ and $q$ are leaves of $\T$,  see Figure~\ref{fig:tree}a, we have
$$d_{\T_{\ell}^-}(p,q)<d_{\T_{\ell}^-}(p,\mathcal{C}_{\T})+d_{\T_{\ell}^-}(q,\mathcal{C}_{\T})\leq \frac{\diam(\T)}{2}+\frac{\diam(\T)}{2}=\diam(\T),$$
since a shortest $pq$-path contains the vertex $u$, but not the whole edge $uv$ where $\mathcal{C}_{\T}$ is located.

Assume now that only one of the points is on the tree, say, $p\in \T$ and $q\in s_{\ell} \setminus \T \cap s_{\ell}$; see Figure~\ref{fig:tree}b.  It could happen that $d_{\T_{\ell}^-}(p,\mathcal{C}_{\T})=\diam(\T)/2$ if $p$ were a leaf of a diametral pair of $\T$; however, $d_{\T_{\ell}^-}(q,\mathcal{C}_{\T})=|\overline{q\mathcal{C}_{\T}}|<\diam(\T)/2$, and therefore
$$d_{\T_{\ell}^-}(p,q)\leq d_{\T_{\ell}^-}(p,\mathcal{C}_{\T})+d_{\T_{\ell}^-}(q,\mathcal{C}_{\T})<d_{\T_{\ell}^-}(p,\mathcal{C}_{\T})+\frac{\diam(\T)}{2}\leq \frac{\diam(\T)}{2}+\frac{\diam(\T)}{2}=\diam(\T).$$

Finally, if $p,q\in s_{\ell}\cap \T$ we have $d_{\T_{\ell}^-}(p,q)=|\overline{pq}|< \diam(\T)$. 

\begin{figure}
\centering
	\includegraphics{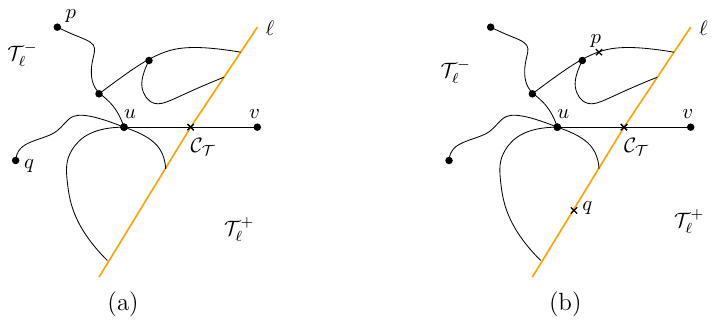}
     \caption{(a) Points $p,q$ are leaves of $\T$, and a shortest $pq$-path goes through $u$. (b) Point $p$ is on $\T$, and $q$ is on segment $s_{\ell}$ but not on the tree; its distance to the center in $\T_{\ell}^-$ is given by the length of segment $\overline{q\mathcal{C}_{\T}}$. }
	\label{fig:tree}
\end{figure}

\

\noindent \emph{Case 2. The center $\mathcal{C}_{\T}$ is a vertex of $\T$.}

\noindent A diametral pair of $\T_{\ell}^-$ (or $\T_{\ell}^+$) may consist of two leaves of $\T$ at distance ${\rm diam}(\T)$. Hence,
 $d_{\T_{\ell}^-}(p,q)\leq d_{\T_{\ell}^-}(p,\mathcal{C}_{\T})+d_{\T_{\ell}^-}(q,\mathcal{C}_{\T})\leq \diam(\T)$. For example, this happens in a star graph with all edges of the same length and not all contained in any half-plane determined by a line through the center. We solve this issue by taking two parallel lines to $\ell$, one slightly above and  the other below. Therefore,  $b(\mathcal{T}) \leq 3$.
\end{proof}

We now turn to the problem of computing the Borsuk number of a continuous geometric tree $\T$. By Proposition~\ref{prop:tree}, this is the same as distinguishing between $b(\T)$ equal to $2$ or $3$. Thus, we first analyze which lines give a correct partition in trees with Borsuk number $2$.

With lines that go exactly through the center $\mathcal{C}_{\T}$, we cannot guarantee that the diameters obtained after cutting are strictly smaller than ${\rm diam}(\mathcal{T})$ (for example, the star graph with three edges of the same length and not contained in the same half-plane through the center.)
However, Lemma~\ref{lem:throughcenter} below states that when a tree has Borsuk number $2$, we can always find a line intersecting $\T$ at a point infinitely close to the center giving a correct partition. 

 \begin{lemma}\label{lem:throughcenter}
Let $\T$ be a continuous geometric tree with center point $\mathcal{C}_{\T}$.  If $b(\T)=2$, then  there exists a sequence of lines  $\{\ell_i\}_{i\geq 0}$ satisfying that:
\begin{enumerate}
\item[(i)]  $\{d_{\T}(t_i,\mathcal{C}_{\T})\}_{i\geq 0}$ approaches zero, where $t_i$ is the closest point in $\T\cap \ell_i$ to $\mathcal{C}_{\T}$.
\item[(ii)] there exists $j\geq 0$ such that for every $i\geq j$, line $\ell_i$ splits $\T$ into two continuous geometric graphs with  diameters smaller than that of  $\T$, that is, 
$$\max\{{\rm diam}(\T_{\ell_i}^+), {\rm diam}(\T_{\ell_i}^-)\}< {\rm diam}(\T).$$
\end{enumerate}
 \end{lemma}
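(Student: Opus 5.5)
We would argue by cases on the position of the center, following the split in the proof of Proposition~\ref{prop:tree}.

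\emph{Center inside an edge.} If $\mathcal{C}_{\T}$ lies in the interior of an edge $uv$, Case~1 of that proof already does the job. It shows that a line through $\mathcal{C}_{\T}$ that contains no edge incident to it gives a correct partition. By Remark~\ref{obs:non-center} and Remark~\ref{rmk:half-diameter} the same estimates survive, with an error term tending to zero, for lines crossing $uv$ at points $t_i\to\mathcal{C}_{\T}$. The only strict inequality used there came from a shortest path between two diametral leaves on one side not traversing the whole edge $uv$, and this persists when the crossing point is close to the center. So we would take $\ell_i$ parallel to a fixed generic line through $\mathcal{C}_{\T}$, crossing $uv$ at $t_i\to\mathcal{C}_{\T}$; then (i) and (ii) hold for all large $i$.

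\emph{Center at a vertex, setup.} Let $\mathcal{C}_{\T}=c$ be a vertex, let $D=\diam(\T)$, and let $L$ be the set of diametral leaves, all at distance $D/2$ from $c$. Group $L$ by the branches $B_1,\dots,B_m$ of $\T$ at $c$ (the components of $\T\setminus\{c\}$ containing diametral leaves); we have $m\geq 2$. Since $b(\T)=2$, there is a line $\ell$ with $\max\{\diam(\T_\ell^+),\diam(\T_\ell^-)\}<D$. We first extract a combinatorial consequence of this. On each side, every pair of diametral leaves from different branches must have its tree path through $c$ either cut by $\ell$, or shortened by a detour along $s_\ell$.

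\emph{Center at a vertex, key step.} Let $c\in\ell^+$, say, and translate $\ell$ parallel to itself toward $c$. We then obtain lines $\ell_i$ that are parallel to $\ell$ and cross an edge $cw$ at points $t_i\to c$, where $cw$ is chosen in a branch lying on the $\ell^-$ side (or split by $\ell$). The claim is that each $\ell_i$ is still correct for $i$ large. For the side not containing $c$, we bound every distance to $t_i$ by $D/2-d_\T(c,t_i)+o(1)$, using the computation of Lemma~\ref{lem:cycles} with $t_i$ in the role of the center. Two points there are then at distance less than $D$ unless both lie in the branch of $w$, where distances are already smaller than $D$ in $\T$. For the side containing $c$, we compare with $\T_\ell^{+}$. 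As the line moves, that piece gains only points of $\T$ at distance at most $D/2$ from $c$, by Remark~\ref{rmk:half-diameter}. The diametral pairs that were shortcut by $s_\ell$ must still be shortcut, or cut, by $s_{\ell_i}$. Because only finitely many leaves and edges are involved, finitely many strict inequalities have to be preserved. Both the length of the segment and its intersection points vary continuously within a combinatorial interval of the translation, which gives (ii) for $i\geq j$.

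\emph{Main obstacle.} The hard part is the side containing $c$. Translating the line changes $s_{\ell_i}$, and this may destroy a shortcut that was essential for the original $\ell$. Our first attempt would be to also allow a small rotation of $\ell$ about its intersection with the segment joining it to $c$, so that the combinatorial type (which edges $s_{\ell_i}$ meets) is preserved. If this fails, we would instead show directly that, when $b(\T)=2$, the diametral leaves on the $c$-side all lie in one branch. This would let a line through points $t_i$ on $cw$, in a suitable generic direction, work by the argument of Case~1.
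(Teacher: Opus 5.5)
\paragraph{The vertex case is not proved.} When $\mathcal{C}_{\mathcal{T}}$ is a vertex $c$, which is the whole content of the lemma, your proposal does not give a proof. (If $\mathcal{C}_{\mathcal{T}}$ is interior to an edge, Case~1 of Proposition~\ref{prop:tree} already gives a correct line through the center, so the constant sequence works. Your limiting version would need more than the $o(1)$ term of Remark~\ref{obs:non-center}, which by itself never yields a strict inequality.)

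The decisive claim of your key step is that pairs shortcut by $s_\ell$ stay shortcut or cut by $s_{\ell_i}$, and that nothing on the $c$-side reaches distance $D$. You assert this, then leave it open as the ``main obstacle''. Both remedies are conditional, and the second is false: it says that when $b(\mathcal{T})=2$ the diametral leaves on the $c$-side of $\ell$ lie in one branch. A counterexample:
\begin{itemize}
\item Place $c$ far above $\ell$, and let two branches descend from $c$ to vertices $x,y$ just above $\ell$ and close to each other.
\item Hang short pendant edges from $x$ and $y$ that cross $\ell$.
\item Let the branches then climb back to diametral leaves $a,b$ above $c$.
\end{itemize}
The $ab$-path lies in $\ell^+$ but is shortcut through $s_\ell$, so $\ell$ is correct. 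Yet $a$ and $b$, in different branches, lie on the $c$-side of $\ell$ and of every translate of $\ell$ toward $c$.

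\paragraph{Errors in the intermediate steps.}
\begin{itemize}
\item Translating toward $c$ makes the $c$-side \emph{lose} tree points and gain only non-tree points of $s_{\ell_i}$. Your description has this backwards.
\item On the other side, points of other branches are in general only within $D/2+d_{\mathcal{T}}(c,t_i)$ of $t_i$, not $D/2-d_{\mathcal{T}}(c,t_i)$. Per-point bounds of the form $D/2+o(1)$ give only $D+o(1)$ for pairs, not $<D$.
\item Your family can violate (i). Suppose every edge at $c$ points away from $\ell$: all edges at $c$ go upward, $\ell$ lies below $c$, and two branches loop down across $\ell$ with tails long enough to make $\ell$ correct. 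Then no translate keeping $c$ on its side meets $\mathcal{T}$ near $c$, so $d_{\mathcal{T}}(t_i,c)$ stays bounded away from $0$.
\end{itemize}

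\paragraph{What is missing.}
\begin{enumerate}
\item[(a)] For any line $\ell'$ avoiding $c$, the side not containing $c$ has diameter $<D$. Follow the tree paths of $p,q$ toward $c$ to their first points $r_p,r_q$ on $\ell'$, and use $|r_pr_q|<|r_pc|+|cr_q|$, which is strict since $c\notin\ell'$. For points of $s_{\ell'}$, add strict convexity of $z\mapsto |zc|$ along $\ell'$.
\item[(b)] Your obstacle dissolves for tree points. Let $\ell_i$ lie strictly between $\ell$ and $c$, and let $p,q\in\mathcal{T}$ lie on the closed $c$-side of $\ell_i$. A shortest $pq$-path in $\mathcal{T}^+_\ell$ that leaves this side meets $\ell_i$ first and last at tree points, because $s_\ell$ lies beyond $\ell_i$. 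Replacing the stretch in between by a piece of $s_{\ell_i}$ gives $d_{\mathcal{T}^+_{\ell_i}}(p,q)\le d_{\mathcal{T}^+_\ell}(p,q)\le\operatorname{diam}(\mathcal{T}^+_\ell)<D$.
\item[(c)] The genuinely delicate points are the non-tree points $z\in s_{\ell_i}$ on the $c$-side. Average two routes from $z$: one through $t_i$ and $c$, the other through the next tree point of $\ell_i$ beyond $z$. This gives $d(z,q)\le\frac{1}{2}\bigl(D+\operatorname{diam}(\mathcal{T}^+_\ell)\bigr)+d_{\mathcal{T}}(t_i,c)$, which is $<D$ once $t_i$ is close to $c$.
\end{enumerate}
Step (c) requires $t_i$ to lie on an edge at $c$. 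So the case where no edge at $c$ points toward $\ell$ needs a different family. For instance, when the edges at $c$ lie in an open half-plane, one can use lines just beyond $c$ that cut all of them.
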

\begin{proof}
Since $b(\T)=2$, there is a line $\ell$ that divides $\T$ into two graphs $\T_{\ell}^+$ and $\T_{\ell}^-$ such that $\max\{\diam(\T_{\ell}^+),\diam(\T_{\ell}^+)\}< \diam(\T)$. 
 If line $\ell$ goes through $\mathcal{C}_{\T}$, the result follows by taking the constant sequence $\ell_i:=\ell$ for every $i$.

Suppose now that $\ell$ does not pass through $\mathcal{C}_{\T}$.
Without loss of generality, we assume that $\ell$ intersects $\T$ in at least two points, otherwise we can always translate it towards $\mathcal{C}_{\T}$ until that happens, or until it passes through $\mathcal{C}_{\T}$ (corresponding to the previous case).
Let $t$ be a closest point to $\mathcal{C}_{\T}$ in $\T\cap\ell$. 
We generate a sequence of lines $\{\ell_i\}_{i\geq 0}$ and the corresponding graphs $\T_{\ell_i}^+$ and $\T_{\ell_i}^-$ by rotating $\ell$ around the farthest point from $\mathcal{C}_{\T}$ in $\T\cap \ell$.
Each line $\ell_i$ generates a point $t_i$, defined as closest point to $\mathcal{C}_{\T}$ in $\T\cap\ell_i$. 
This sequence of lines and points satisfies item (i);  next we show that it also satisfies item (ii).

 By Remark~\ref{obs:non-center}, when lines are sufficiently close to the center, Lemma~\ref{lem:cycles} can be applied. Thus, there exists $j\geq 0$ such that for every $i\geq j$, we have
 \begin{equation}\label{eq:inequality}
\max\{{\rm diam}(\T_{\ell_i}^+), {\rm diam}(\T_{\ell_i}^-)\}\leq {\rm diam}(\T)\end{equation}
 Further, by Remark~\ref{rmk:half-diameter}, the distance from $\mathcal{C}_{\T}$ to any point on the maximal segment $s_{\ell_i}$ that is not on $\T$ is strictly smaller than $\diam(\T)/2$, and so the equality in~(\ref{eq:inequality}) can only be given by diametral pairs of $\T$.
 Now, the graphs $\T_{\ell}^+$ and $\T_{\ell}^-$ do not contain diametral paths of $\T$, all of them must go through $\mathcal{C}_{\T}$ and connect a leaf of $\T_{\ell}^+$ with a leaf of $\T_{\ell}^-$. When $\ell$ is rotated giving rise to the sequence $\{\ell_i\}_{i\geq 0}$, we obtain the graphs $\T_{\ell_i}^+$ and $\T_{\ell_i}^-$ but without going through the center $\mathcal{C}_{\T}$, which always remains in the same half-plane. Hence, there cannot be diametral paths of $\T$ in either $\T_{\ell_i}^+$ or $\T_{\ell_i}^-$, which implies that the inequality in~(\ref{eq:inequality}) is strict.
\end{proof}

We are now ready to prove our main result in this section.

\begin{theorem}
\label{th:tree}
The Borsuk number of any continuous geometric tree $\T$ with $n$ vertices can be computed in $O(n^2)$ time.
\end{theorem}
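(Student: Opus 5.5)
By Proposition~\ref{prop:tree}, $b(\T)\in\{2,3\}$, so computing $b(\T)$ amounts to deciding whether $b(\T)=2$. The plan is to use Lemma~\ref{lem:throughcenter} to reduce this to a combinatorial question about the directions of lines passing (infinitesimally close to) the center $\mathcal{C}_{\T}$, and then answer that question in $O(n^2)$ time.

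First, compute the diametral structure of $\T$. Find $\diam(\T)$ and the center point $\mathcal{C}_{\T}$ in $O(n)$ with the standard tree algorithms; the diameter of a continuous tree is attained between leaves. Then classify the leaves: a leaf $p$ is \emph{diametral} if $d_{\T}(p,\mathcal{C}_{\T})=\diam(\T)/2$. Group the diametral leaves by the branch at $\mathcal{C}_{\T}$ (the edge direction leaving $\mathcal{C}_{\T}$) that contains them. This is linear time.

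Next, characterize the correct lines. By Lemma~\ref{lem:throughcenter} we may restrict to lines $\ell$ meeting $\T$ arbitrarily close to $\mathcal{C}_{\T}$. For such lines, Lemma~\ref{lem:cycles} together with Remarks~\ref{obs:non-center} and~\ref{rmk:half-diameter} shows that the only obstruction to strict decrease is a pair of diametral leaves from different branches that end up in the same side $\T_\ell^{\pm}$ and remain joined through the center there. Since $\mathcal{C}_{\T}$ lies strictly on one side of $\ell$, that side contains the center. It therefore suffices that the side containing $\mathcal{C}_{\T}$ has diametral leaves in at most one branch, while the other side receives all remaining diametral leaves, which then lose the center. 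I have to be careful on that other side: two diametral leaves may still be joined through a point of $s_\ell$. By Remark~\ref{rmk:half-diameter}, however, their distance is then strictly smaller, because they cannot both reach the center.

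The computational question is thus the following: is there a line $\ell$, passing near $\mathcal{C}_{\T}$, such that in the tree cut by $\ell$, every diametral leaf outside one chosen branch $B$ lies in the open half-plane not containing $\mathcal{C}_{\T}$? More precisely, each such leaf's connection to $\mathcal{C}_{\T}$ must be severed, meaning its tree path to the center crosses $\ell$. For each diametral leaf $p$, the tree path from $p$ to $\mathcal{C}_{\T}$ is a polygonal chain. A line through (near) $\mathcal{C}_{\T}$ with direction $\theta$ severs $p$ iff the chain has a point strictly on the far side. This gives, for each leaf, a set of angles in $[0,2\pi)$ that is a union of arcs computable from the chain's vertices. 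Since all chains share edges of the tree, I will process them by a DFS from $\mathcal{C}_{\T}$, maintaining the angular extent of each root-to-vertex path. This yields one arc per leaf, the angular hull of its path, in $O(n)$ total. For each of the $O(n)$ candidate branches $B$, I intersect the arcs of the diametral leaves outside $B$ and check nonemptiness, which takes $O(n)$ per branch. Together with the analogous check that the orientation of the center's side is consistent, this gives $O(n^2)$ overall.

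The main obstacle is the characterization step: proving rigorously that, for lines infinitesimally close to the center, the cut fails exactly when two diametral leaves from distinct branches stay connected through $\mathcal{C}_{\T}$ on one side. This must also exclude new diametral pairs that use $s_\ell$ and account for a center that lies in the interior of an edge, where Case~1 of Proposition~\ref{prop:tree} already gives $b(\T)=2$. I would handle it by refining the case analysis of Proposition~\ref{prop:tree} and Lemma~\ref{lem:throughcenter}.
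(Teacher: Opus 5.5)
\emph{There is a genuine gap in your characterisation step.} You replace ``the distance from a diametral leaf $v$ on the centre's side to $\mathcal{C}_{\mathcal{T}}$ is decreased'' by ``the tree path from $v$ to $\mathcal{C}_{\mathcal{T}}$ crosses $\ell$''. The second implies the first, but not conversely. Suppose $\ell$ crosses, at a point $w$, an edge of a side subtree attached to the $v\mathcal{C}_{\mathcal{T}}$-path at a vertex $x$ (an edge of the paper's $\mathcal{T}(v,\mathcal{C}_{\mathcal{T}})$ not on the path). Then on the centre's side you can walk $v\to x\to w$ and then along $s_\ell$ to (a point infinitely close to) $\mathcal{C}_{\mathcal{T}}$. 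This route has length about $d_{\mathcal{T}}(v,x)+d_{\mathcal{T}}(x,w)+|w\mathcal{C}_{\mathcal{T}}|$, which is below $d_{\mathcal{T}}(v,\mathcal{C}_{\mathcal{T}})$ whenever the $x\mathcal{C}_{\mathcal{T}}$-path winds, even though the whole $v\mathcal{C}_{\mathcal{T}}$-path stays on the centre's side. So your test is only a sufficient condition for $b(\mathcal{T})=2$, and your algorithm can wrongly output $3$.

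Here is an instance. Let the centre be a vertex with five diametral branches. Four are straight arms of length $R$ in directions $18^\circ,162^\circ,234^\circ,306^\circ$. The fifth lies in a thin cone around $90^\circ$: it runs out to distance about $50$, comes back to a vertex $x$ at distance $2$ from the centre, then runs out again to its leaf $v$ with $d_{\mathcal{T}}(v,\mathcal{C}_{\mathcal{T}})=R$. Attach at $x$ a pendant edge of length about $2.5$ ending in direction $150^\circ$. The five path cones are roughly $72^\circ$ apart, so no open half-plane meets four of them, and your test fails for every choice of $B$. Now take a line infinitely close to the centre with direction $140^\circ$, with the centre on the side of the $18^\circ$ arm. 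It puts the arms at $162^\circ,234^\circ,306^\circ$ on the far side and crosses the pendant edge. On the centre's side it removes almost the whole out-and-back detour from $d(v,\mathcal{C}_{\mathcal{T}})$, so only the $18^\circ$ arm keeps a leaf at distance $R$ there. Both parts then have diameter below $2R$, so $b(\mathcal{T})=2$.

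This shortcut mechanism is exactly the paper's Case~3. For each diametral leaf $v$, the paper determines edge by edge, using the law of cosines, which lines of each double wedge around the centre shorten $d_{\mathcal{T}}(v,\mathcal{C}_{\mathcal{T}})$ through an edge of $\mathcal{T}(v,\mathcal{C}_{\mathcal{T}})$. It refines the $O(n)$ double wedges accordingly and then decides $b(\mathcal{T})=2$ from the leaf-by-wedge sign matrix. Your per-leaf set of good directions must be enlarged by these shortcut sets, which are not angular hulls of the path. Without them your characterisation, and hence the algorithm and its correctness, is incomplete. Your grouping of diametral leaves by branch in the final test is fine.
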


\begin{proof}
First, we compute the center $\mathcal{C}_{\T}$ of $\T$ in linear time \cite{HCH81}. By  Proposition~\ref{prop:tree}, if $\mathcal{C}_{\T}$ is not a vertex then $b(\T)=2$; otherwise, $b(\T)\leq 3$, and we must characterize when the value is $2$. 
To do this, Lemma~\ref{lem:throughcenter} is key since it tells us that we can focus on lines that are close enough to $\mathcal{C}_{\T}$ as our set of candidate lines to give a correct partition of $\T$.

Let $\ell$ be a line that passes exactly through the center $\mathcal{C}_{\T}$, which is assumed to be a vertex of $\T$ (case 2 in Proposition~\ref{prop:tree}), and divides the tree into $\T_{\ell}^+$ and $\T_{\ell}^-$; again, we assume that $\ell$ does not contain any edge incident with $\mathcal{C}_{\T}$.  By Lemma \ref{lem:cycles} and Remark \ref{rmk:half-diameter}, to check whether line $\ell$ gives a correct partition, that is, whether inequality (\ref{eq:non-stric}) is strict, it suffices to check if the distance between diametral vertices of $\T$ (which are always leaves) decreases when they are both in either $\T_{\ell}^+$ or $\T_{\ell}^-$. 

First we observe that there are $O(n)$ classes of combinatorially equivalent lines that pass through $\mathcal{C}_{\T}$ (i.e., lines through the center intersecting the same subset of edges of $\T$). Indeed, the center $\mathcal{C}_{\T}$ is a vertex, and the other $n-1$ vertices can be ordered angularly around  $\mathcal{C}_{\T}$. This order determines up to $2(n-1)$ double wedges with  apex $\mathcal{C}_{\T}$, defined by pairs of angularly consecutive lines that pass through $\mathcal{C}_{\T}$ and a vertex of $\T$.
Each double wedge is identified with a class of combinatorially equivalent lines; see Figure \ref{fig:wedges2}.

\begin{figure}
\centering
\includegraphics{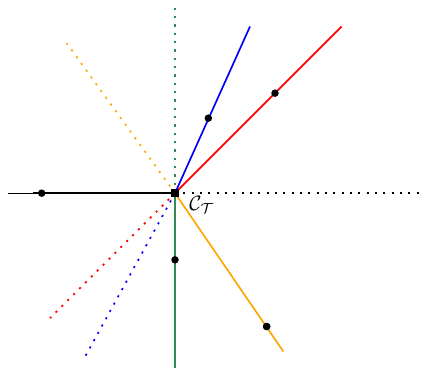}
     \caption{Double wedges generated by the angular order around $\mathcal{C}_{\T}$.}
	\label{fig:wedges2}
\end{figure}

Let  $u,v$ be a diametral pair of leaves   in $\T$, say, $u,v\in \T_{\ell}^-$ (we also use the expression diametral leaf to refer to a leaf that is part of a diametral pair of leaves). We consider the set $P(v,\mathcal{C}_{\T})$  of all vertices of the $v\mathcal{C}_{\T}$-path except $\mathcal{C}_{\T}$, and denote by $\T(v, \mathcal{C}_{\T})$ the subtree of $\T$ induced by the vertices whose path to $\mathcal{C}_{\T}$ has some vertex in $P(v,\mathcal{C}_{\T})$; see Figure~\ref{fig:cases_tree}. 
The analogous set and subtree are also defined for $u$, and denoted by $P(u,\mathcal{C}_{\T})$ and $\T(u, \mathcal{C}_{\T})$, respectively.
We distinguish three cases. Refer to Figure~\ref{fig:cases_tree}.

\begin{figure}
\centering
	\includegraphics{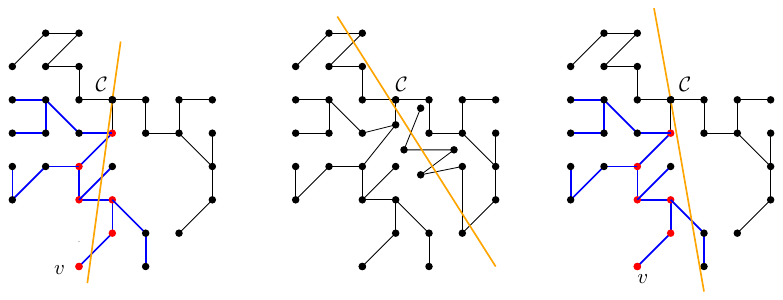}
     \caption{The set $P(v,\mathcal{C}_{\T})$ consists of the red vertices; the blue edges determine the subtree $\T(v, \mathcal{C}_{\T})$. From left to right: cases 1--3 of Theorem \ref{th:tree}.}
	\label{fig:cases_tree}
\end{figure}

\

\noindent \emph{Case 1.} \emph{There are vertices of $P(v,\mathcal{C}_{\T})$ in  both halfplanes determined by $\ell$}. The  path in $\T$ connecting $v$ with $\mathcal{C}_{\T}$ is shortened by a straight-line segment contained in $\ell$. Hence, $d_{\T_{\ell}^-}(v,\mathcal{C}_{\T})<d_{\T_{\ell}}(v,\mathcal{C}_{\T})=\diam(\T)/2$, which implies that $d_{\T_{\ell}^-}(u,v)<\diam(\T)$.

\

\noindent \emph{Case 2.}  \emph{All vertices of $P(v,\mathcal{C}_{\T})$ are in the same halfplane  determined by $\ell$, and line $\ell$ does not intersect the convex hull of $\T(v, \mathcal{C}_{\T})$}. We could argue as in case 1 for the set $P(u,\mathcal{C}_{\T})$ but it might be the case that this set were in the same conditions as $P(v,\mathcal{C}_{\T})$. However, we can always consider two lines combinatorially equivalent  and parallel to $ \ell$, one slightly above and the other below, intersecting $\T$ at a point infinitely close to $\mathcal{C}_{\T}$. We denote them by  $\ell'$ and $\ell''$, respectively. It follows that $d_{\T_{\ell'}^-}(u,v)<\diam(\T)$ or $d_{\T_{\ell''}^-}(u,v)<\diam(\T)$.

\

\noindent \emph{Case 3.} \emph{All vertices of $P(v,\mathcal{C}_{\T})$ are in the same halfplane  determined by $\ell$, and line $\ell$ intersects the convex hull of $\T(v, \mathcal{C}_{\T})$}.
As explained before, each of the $O(n)$ classes of combinatorially equivalent lines that go through $\mathcal{C}_{\T}$ can be identified with a double wedge with apex $\mathcal{C}_{\T}$. Let $e$ be an edge of $\T(v, \mathcal{C}_{\T})$ that crosses one of the double wedges that correspond to $\ell$, say $\mathcal{W}$. We denote by  $\ell_{\mathcal{W}}$ and $\widetilde{\ell}_\mathcal{W}$ the extreme lines defining the wedge $\mathcal{W}$, where $\widetilde{\ell}_\mathcal{W}$ is the angularly closest to $v$ (from $\mathcal{C}_{\T}$); see Figure \ref{fig:wedge}a. This edge $e$ either:
\begin{enumerate}
    \item[(i)] shortens the distance (in $\T$) from $v$ to $ \mathcal{C}_{\T}$ using $\ell_{\mathcal{W}}$ (by the triangle inequality, it would also shorten that distance using any line in $\mathcal{W}$), or
    \item[(ii)] does not shorten the distance (in $\T$) from $v$ to $ \mathcal{C}_{\T}$ using $\widetilde{\ell}_\mathcal{W}$   (again, by triangle inequality, it would not shorten that distance  using any line in $\mathcal{W}$), or
    \item[(iii)] shortens the distance from $v$ to $ \mathcal{C}_{\T}$ (in $\T$) using $\widetilde{\ell}_\mathcal{W}$  but not using $\ell_{\mathcal{W}}$. Then, we can split $\mathcal{W}$ into two double wedges $\mathcal{W}_1$ and $\mathcal{W}\setminus \mathcal{W}_1 $ such that all lines of $\mathcal{W}$ shortening that distance using edge $e$ are in $\mathcal{W}_1$.

We claim that each edge $e$ contributes to the splitting of at most one double wedge, and the boundary between $\mathcal{W}_1$ and $\mathcal{W}\setminus \mathcal{W}_1 $ can be computed using the law of cosines. Indeed, consider the set $\mathcal{S}$ of lines that go through the center $ \mathcal{C}_{\T}$ and intersect edge $e$. We parametrize the edge $e$ by $\lambda$ so that each fixed value of $\lambda$ corresponds to a line in $\mathcal{S}$. 
Since $e$ is a fixed edge, the distance in $\T$ from one of its endpoints to $\mathcal{C}_{\T}$ is a fixed value, which we denote by $a$. 
Hence, for each value of $\lambda$, there is a unique line in $\mathcal{S}$ that can satisfy the law of cosines, as Figure~\ref{fig:wedge}b illustrates.
This means that one edge $e$ can cause the division of at most one double wedge  $\mathcal{W}$. 
Therefore, after all possible subdivisions, we still have $O(n)$ double wedges, and all lines in each double wedge have the same metric behavior. Moreover, for a given double wedge $\mathcal{W}$, the value that determines the boundary between $\mathcal{W}_1$ and $\mathcal{W}\setminus \mathcal{W}_1 $  (parametrizing the sub-segment with endpoints $\widetilde{\ell}_\mathcal{W}\cap e$ and ${\ell}_\mathcal{W}\cap e$) can be computed, by the law of cosines, as:
$$\sqrt{b^2+c^2-2bc \cos \gamma},$$
where $b$ (resp., $c$) is the length of the segment with endpoints $\mathcal{C}_{\T}$  and   $\widetilde{\ell}_\mathcal{W}\cap e$ (resp., ${\ell}_\mathcal{W}\cap e$), and $\gamma$ is the angle formed by $\ell_\mathcal{W}$ and $\widetilde{\ell}_\mathcal{W}$ in the wedge crossed by $e$.

\end{enumerate}

\begin{figure}
\centering
	\includegraphics{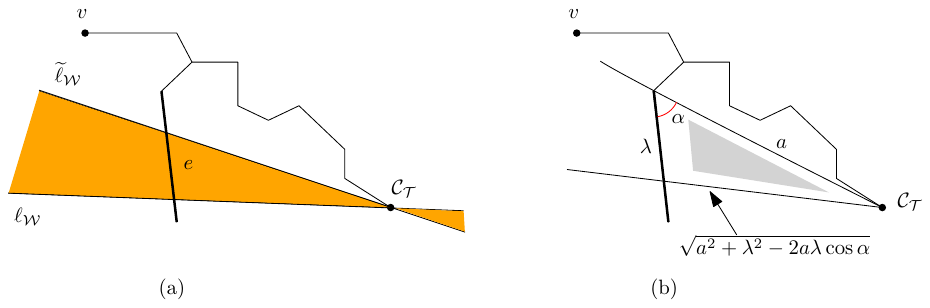}
     \caption{(a) Scheme of the setting of case 3 (the double wedge $\mathcal{W}$ in orange); (b) 
     Since the edge is fixed, the distance $a$ and the angle $\alpha$ are also fixed, and therefore, the law of cosines forces a unique value of $\lambda$.}
	\label{fig:wedge}
\end{figure}

 The conditions of cases 1 and 2 can be checked in linear time for each of the $O(n)$ combinatorially different classes of candidate lines (note that convex hulls do not need to be computed to check the conditions, since it is enough to check extreme lines). For each class, take any line $\ell$ as a representative. If we find a line in those cases, $b(\T)=2$. Otherwise, our candidate line for having Borsuk number $2$ must satisfy the conditions of case 3. Then, we define a matrix $\mathcal{M}(\T)$ whose columns are indexed by the diametral leaves $v$ of $\T$, and the rows by the double wedges $\mathcal{W}$; the entry $v\mathcal{W}$ is $0$ if the distance from $v$ to $\mathcal{C}_{\T}$ (in $\T$) is shortened using the lines of $\mathcal{W}$, and $\pm 1$ otherwise: $+1$ if $v$ is above  $\mathcal{W}$, and $-1$ if it is below $\mathcal{W}$ (i.e., one side and the other determined by the wedge). Since there are $O(n)$ leaves and $O(n)$ wedges, the matrix $\mathcal{M}(\T)$ has size $O(n^2)$. 
 
 The Borsuk number of $\T$ (in case 3) is $2$
if and only if there is a row in $\mathcal{M}(\T)$ with either at most one entry $+1$ or at most one entry $-1$. Indeed, a non-zero entry in $\mathcal{M}(\T)$ means that there is a diametral leaf $v$ and a class of combinatorially equivalent lines such that no line in the class decreases the distance from $v$ to $\mathcal{C}_{\T}$. Hence, a row of $\mathcal{M}(\T)$ with at most one $+1$ entry, say, at entry $v\mathcal{W}$, tells us that there is a class of combinatorially equivalent lines, associated to the double wedge $\mathcal{W}$, such that:
\begin{itemize}
\item no line in $\mathcal{W}$ decreases $d_{\T}(v, \mathcal{C}_{\T})$;
\item all lines in $\mathcal{W}$ decrease the distance (in $\T$) from $\mathcal{C}_{\T}$ to all diametral leaves with entry $0$;
\item all diametral leaves corresponding to entry $-1$ are on the other side (than $v$) of $\mathcal{W}$. 
\end{itemize}
By Lemma \ref{lem:throughcenter} we can take a line $\ell$ intersecting $\T$ at a point infinitely close to $\mathcal{C}_{\T}$, parallel to any line of the class, and leaving $\mathcal{C}_{\T}$ on the same half-plane as $v$. Thus, we divide $\T$ into the graphs $\T_{\ell}^+$ and $\T_{\ell}^-$ in such a way that $v$ and $\mathcal{C}_{\T}$ are in the same new graph, and all diametral leaves whose distance to the center is not decreased are in the other; suppose that $v, \mathcal{C}_{\T}\in \T_{\ell}^+$. It holds that ${\rm diam}(\T_{\ell}^+)<{\rm diam}(\T)$, since
the unique distance from a diametral leaf of $\T$ to $\mathcal{C}_{\T}$ that is not decreased in $\T_{\ell}^+$ is that from $v$, and to reach distance $\diam(\T)$, there must be at least two such diametral leaves. Moreover, the diameter of $\T_{\ell}^-$ is also strictly smaller than that of $\T$, since the center $\mathcal{C}_{\T}$ is located in  $\T_{\ell}^+$ and so, for any two diametral leaves of $\T$ that are in $\T_{\ell}^-$, their path in $\T$ is shortened by some segment contained in $\ell$.

Therefore, in each case, we can determine whether there is a line giving a correct partition in $O(n^2)$ time, if so, $b(\T)=2$; otherwise, $b(\T)=3$.
\end{proof}

\section{Concluding remarks}\label{sec:conclusions}

We extended the widely-studied Borsuk's problem to the context of graphs. Our approach either partitions an abstract graph by removing edges, or a continuous plane geometric graph by a sequence of line cuts. In both cases, one could propose other partitioning methods; however, we believe that ours are natural, since, as explained before, they preserve the core idea of Borsuk's original problem, and there are real-world applications (in, for example, social networks studies or network design) related with our approach. 

There are many questions that remain open.  A challenging problem is to improve our bounds for the Borsuk number of continuous geometric graphs; in particular, to determine whether this parameter can be upper bounded by a constant (in analogy with Borsuk's original problem). We believe that  further exploration of the structure of diametral sets in  plane continuous geometric graphs  can contribute to progress in this problem. 
Finally, it would also be interesting to design efficient algorithms to compute the Borsuk number, and to characterize the graphs with a given Borsuk number, for instance, with Borsuk number two.

 \paragraph{Acknowledgments.}
D.G., A.M., and R.S. were partially supported by grant PID2023-150725NB-I00 funded by MICIU/AEI/10.13039/501100011033.
D.G. was also supported by RED2024-153572-T.

\bibliographystyle{plainurl}
\bibliography{borsuk.bib}

\newpage
\section*{Appendix}

In this appendix we include the code used to prove Proposition~\ref{prop:disjdiam}.

The code follows the idea presented in the proof sketch. For technical reasons, the variables used in the linear program are named different here than in the proof sketch. 
Refer to Figure~\ref{fig:programCode} for an illustration of the main variables in the linear program (method \texttt{runLP}).
Firstly, the four points in questions ($p,q,p',q'$) are called $p,q,u,v$ in the program.
Secondly, in the linear program  we use single letters (`a' to `j') to name the variables  with the segments of paths along the two disjoint shortest paths, and `r',`s',`t',`u' for the lengths of the crossings parts.
Thirdly, entry and exist points along the $p$-$q$ and $u$-$v$ are numbered $0$--$3$.

\begin{figure}[tb]
    \centering
    \includegraphics{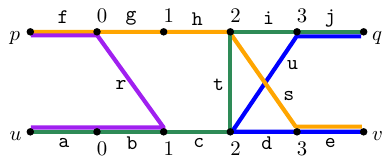}
    \caption{Illustration of variables in the linear program. The numbers $0$--$3$ are the indices of the possible exit/entry points on the two shortest paths.}
    \label{fig:programCode}
\end{figure}

The complete check for the $4^8$ cases is carried out when running last piece of code, marked as ``MAIN CODE''.

\begin{lstlisting}[caption={Full SageMath Implementation}]
from sage.graphs.planarity import is_planar
#### Solves one instance of the two diameter problem
#### Arguments:
#### inst_no: instance number (used only for reporting)
#### pu_in, pu_out, pv_in, pv_out, qu_in, qu_out, qv_in, qv_out: entry and exit points (0,1,2,or 3) for each of the four crossing paths, always from the point of view of diameter pq (pu_in=1, pu_out=3 means that the path from p to u goes from p to p_1, then it crosses to p_3 on uv, and then goes from p_3 to u).
#### show_values: boolean value to show variable values in case of feasible solution
def runLP(inst_no, pu_in, pu_out, pv_in, pv_out, qu_in, qu_out, qv_in, qv_out,show_values):
    p = MixedIntegerLinearProgram()
    v = p.new_variable(real=True,nonnegative=True)
    a,b,c,d,e,f,g,h,i,j = v['a'], v['b'], v['c'],v['d'], v['e'],v['f'], v['g'], v['h'], v['i'], v['j']
    r,s,t,u = v['r'], v['s'],v['t'], v['u'] #lenghts of the four crossing connections
    m = v['m']
    D = v['D']

    # Partial cost tableS
    pq_costs_from_p = [f,f+g,f+g+h,f+g+h+i]
    pq_costs_from_q = [g+h+i+j,h+i+j,i+j,j]
    uv_costs_from_u = [a,a+b,a+b+c,a+b+c+d]
    uv_costs_from_v = [b+c+d+e,c+d+e,d+e,e]

    # The two diameters
    p.add_constraint(D == 10) # WLOG we assume this is the value of the diameter
    p.add_constraint(a+b+c+d+e == D)
    p.add_constraint(f+g+h+i+j == D)

    # Four crossing shortest paths
    p.add_constraint(pq_costs_from_p[pu_in]+ r + uv_costs_from_u[pu_out] <= D) #p~u
    p.add_constraint(pq_costs_from_p[pv_in]+ s + uv_costs_from_v[pv_out] <= D) #p~v
    p.add_constraint(pq_costs_from_q[qu_in]+ t + uv_costs_from_u[qu_out] <= D) #q~u
    p.add_constraint(pq_costs_from_q[qv_in]+ u + uv_costs_from_v[qv_out] <= D) #q~v

    # Connections between the two diameters must be >0
    p.add_constraint(r>=m)
    p.add_constraint(s>=m)
    p.add_constraint(t>=m)
    p.add_constraint(u>=m)
    # Also the first and last edge of each diameter ("hairs") have to be >0
    p.add_constraint(a>=0) #antes >=m
    p.add_constraint(e>=0)
    p.add_constraint(f>=0)
    p.add_constraint(j>=0)

    #Add all path constraints
    # Two arrays, one for the entry points along uv, and one for those along pq
    uv_entries = {0: [], 1: [], 2: [], 3: []}
    pq_entries = {0: [], 1: [], 2: [], 3: []}

    # Fill in all the information about connections
    pq_entries[pu_in].append({'out':pu_out, 'cost': r})
    uv_entries[pu_out].append({'out':pu_in, 'cost': r})
    pq_entries[pv_in].append({'out':pv_out, 'cost': s})
    uv_entries[pv_out].append({'out':pv_in, 'cost': s})
    pq_entries[qu_in].append({'out':qu_out, 'cost': t})
    uv_entries[qu_out].append({'out':qu_in, 'cost': t})
    pq_entries[qv_in].append({'out':qv_out, 'cost': u})
    uv_entries[qv_out].append({'out':qv_in, 'cost': u})

    # pq-paths: Add a constraint >D for each path that start at p, goes to uv, goes back to pq, and ends at q
    for entry in pq_entries:
        for exit in pq_entries[entry]:
            for entryuv in uv_entries:
                for exituv in uv_entries[entryuv]:
                    pq_in1=entry
                    uv_in1=exit['out']
                    uv_in2=entryuv
                    pq_in2=exituv['out']
                    # Path on uv between uv_in1 and uv_in2
                    if (uv_in1<uv_in2):
                        cost_uv = uv_costs_from_u[uv_in2] - uv_costs_from_u[uv_in1]
                    else:
                        cost_uv = uv_costs_from_u[uv_in1] - uv_costs_from_u[uv_in2]
                    total_cost = pq_costs_from_p[pq_in1] + exit['cost'] + cost_uv + exituv['cost'] + pq_costs_from_q[pq_in2]

                    p.add_constraint(total_cost >= D+m)

    # uv-paths: Add a constraint >D for each path that start at u, goes to pq, goes back to uv, and ends at v
    for entry in uv_entries:
        for exit in uv_entries[entry]:
            for entrypq in pq_entries:
                for exitpq in pq_entries[entrypq]:
                    uv_in1=entry
                    pq_in1=exit['out']
                    pq_in2=entrypq
                    uv_in2=exitpq['out']
                    # Path on pq between pq_in1 and pq_in2
                    if (pq_in1<pq_in2):
                        cost_pq = pq_costs_from_p[pq_in2] - pq_costs_from_p[pq_in1]
                    else:
                        cost_pq = pq_costs_from_p[pq_in1] - pq_costs_from_p[pq_in2]
                    total_cost = uv_costs_from_u[uv_in1] + exit['cost'] + cost_pq + exitpq['cost'] + uv_costs_from_v[uv_in2]

                    p.add_constraint(total_cost >= D+m)

    p.set_objective(m) # There will be a feasible solution iff the objective function is >0
    p.solve()

    #Report result if m>0
    if (p.get_values(m)>0 or show_values):
        # Build extended graph to check planarity
        G = buildExtendedGraph(pu_in, pu_out, pv_in, pv_out, qu_in, qu_out, qv_in, qv_out)
        # G is planar iff p,q,u,v are on the same face
        if (sage.graphs.planarity.is_planar(G)):
            print ("Instance #"+str(inst_no) +" "+ str([pu_in, pu_out, pv_in, pv_out, qu_in, qu_out, qv_in, qv_out])+ " -> m="+ str(p.get_values(m)))
            print("Planar")

    return (p.get_values(m)>0 )

# Builds the graph from the diagonal choices, adding one extra node connected to p,q,u,v. This graph will be planar if and only if p,q,u,v lie on the same face.

def buildExtendedGraph(pu_in, pu_out, pv_in, pv_out, qu_in, qu_out, qv_in, qv_out):
    # The two diameter paths, p=1, q=2, u=3, v=4
    # Path pq: p=1,10,11,12,13,2=q
    # Path uv: u=3,20,21,22,23,4=v
    G = Graph({1:[10], 10:[11], 11:[12], 12:[13], 13:[2], 3:[20], 20:[21], 21:[22], 22:[23], 23:[4]})
    # The four crossing connections
    G.add_edge((10+pu_in, 20+pu_out))
    G.add_edge((10+pv_in, 20+pv_out))
    G.add_edge((10+qu_in, 20+qu_out))
    G.add_edge((10+qv_in, 20+qv_out))
    # One extra vertex connected to u,v,p,q
    G.add_edge(1, 99)
    G.add_edge(2, 99)
    G.add_edge(3, 99)
    G.add_edge(4, 99)
    return G

## MAIN CODE: try out ALL 4^8 combinations

#pu_in, pu_out, pv_in, pv_out, qu_in, qu_out, qv_in, qv_out
c=0
found=0
print("Starting all case analysis.")
for pu_in in [0,1,2,3]:
    for pu_out in [0,1,2,3]:
        for pv_in in [0,1,2,3]:
            for pv_out in [0,1,2,3]:
                for qu_in in [0,1,2,3]:
                    for qu_out in [0,1,2,3]:
                        for qv_in in [0,1,2,3]:
                            for qv_out in [0,1,2,3]:
                                c=c+1 #instance counter
                                feasible=runLP(c, pu_in, pu_out, pv_in, pv_out, qu_in, qu_out, qv_in, qv_out, False)
                                if (feasible):
                                    found=found+1
                                    print ("#found: "+str(found))
    print("Mega-iteration puin="+str(pu_in)+" finished")
print("Finished.")
\end{lstlisting}

\end{document}